\newtheorem{theorem}{Theorem}
\newtheorem{lemma}{Lemma}
\newtheorem{proposition}{Proposition}
\newtheorem{assumption}{Assumption}
\newcommand{\beq}{\begin{equation}}
\newcommand{\eeq}{\end{equation}}
\newcommand{\beqa}{\begin{eqnarray}}
\newcommand{\eeqa}{\end{eqnarray}}
\newcommand{\beqan}{\begin{eqnarray*}}
\newcommand{\eeqan}{\end{eqnarray*}}
\newcommand{\Lcal}{{\cal L}}
\newcommand{\Wcal}{{\cal W}}
\newcounter{l1}
\newcounter{l2}
\newcounter{l3}
\newcommand{\bdotlist}{\begin{list}{$\bullet$}{}}
\newcommand{\bboxlist}{\begin{list}{$\Box$}{}}
\newcommand{\bbboxlist}{\begin{list}{\raisebox{.005in}{{\tiny
$\blacksquare$ \ \ }}}{}}
\newcommand{\bdashlist}{\begin{list}{$-$}{} }
\newcommand{\blist}{\begin{list}{}{} }
\newcommand{\barablist}{\begin{list}{\arabic{l1}}{\usecounter{l1}}}
\newcommand{\balphlist}{\begin{list}{(\alph{l2})}{\usecounter{l2}}}
\newcommand{\bAlphlist}{\begin{list}{\Alph{l2}.}{\usecounter{l2}}}
\newcommand{\bdiamlist}{\begin{list}{$\diamond$}{}}
\newcommand{\bromalist}{\begin{list}{(\roman{l3})}{\usecounter{l3}}}
\title{\LARGE \bf
Quantification of Market Power Mitigation via Efficient Aggregation of Distributed Energy Resources
}
\author{Zuguang Gao, Khaled Alshehri, and John R. Birge
\thanks{Z.~Gao and J.~R.~Birge are with Booth School of Business, The University of Chicago, Chicago, IL 60637, USA. 
        {\tt\small \{zuguang.gao, john.birge\}@chicagobooth.edu}}%
\thanks{K. Alshehri is with the Control and Instrumentation Engineering Department and the Interdisciplinary Research Center for Smart Mobility and Logistics, King Fahd University of Petroleum and Minerals (KFUPM), Dhahran, Saudi Arabia.
        {\tt\small kalshehri@kfupm.edu.sa}}  
\thanks{This work was supported in part by the National Science Foundation (NSF) Award No.~1832230. Z.~Gao and J.~R.~Birge would like to acknowledge the
	support from the University of Chicago Booth School of Business. The work of K.~Alshehri was supported by the Interdisciplinary
    	Research Center for Smart Mobility and Logistics at KFUPM under Grant No.~INML2106.}    
}
\begin{document}
\maketitle
\thispagestyle{empty}
\pagestyle{empty}

\begin{abstract}
	
Distributed energy resources (DERs) such as solar panels have small supply capacities and cannot be directly integrated into wholesale markets. So, the presence of an intermediary is critical. The intermediary could be a profit-seeking entity (called the aggregator) that buys DER supply from prosumers, and then sells them in the wholesale electricity market. Thus, DER integration has an influence on wholesale market prices, demand, and supply. The purpose of this article is to shed light onto the impact of efficient DER aggregation on the market power of conventional generators. Firstly, under efficient DER aggregation, we quantify the social welfare gap between two cases: when conventional generators are truthful, and when they are strategic. We also do the same when DERs are not present. Secondly, we show that the gap due to market power of generators in the presence of DERs is smaller than the one when there is no DER participation. Finally, we provide explicit expressions of the gaps and conduct numerical experiments to gain deeper insights.  The main message of this article is that market power of conventional generators can be mitigated by adopting an efficient DER aggregation model.


\end{abstract}

\section{INTRODUCTION}

Distributed energy resources (DERs), according to the North American Electric Reliability Corporation (NERC), are defined as ``any resource on the distribution system that produces electricity and is not otherwise included in the formal NERC definition of the Bulk Electric System (BES)"~\cite{derdefinition}. Their increasing integration poses fundamental challenges to the design and operation of electricity markets~\cite{ritzenhofen2016structural}, as they are energy sources for the distribution power system, but at the same time, independent market operators do not have enough visibility over them. This is a concern for market operators because DERs dramatically influence the demand behavior. One common way of integrating DERs into wholesale markets is via an intermediary, called the {\it Aggregator}, which could be a profit-seeking entity or a utility with good visibility over the distribution lines.

In~\cite{9683117}, we proposed an aggregation model with a profit-seeking aggregator which achieves full market efficiency. Specifically, every prosumer's behavior (buying/selling amount), along with the market price and social welfare, are exactly the same as if these prosumers participate directly in the wholesale market, which is the ideal but unrealistic case, as aggregators are necessary intermediaries (see Fig.~\ref{sketch} for an illustration). There are two underlying assumptions in the model: all prosumers bid truthfully about their utility of consumption, and all generators bid truthfully about their cost of production. The former assumption is more justifiable as each prosumer represents a small fraction in the energy market; thus, she does not believe her bidding could make a difference in the market price. The later assumption, however, may not hold in general as the generators' bidding might affect the market price, and they may benefit from non-truthful (strategic) bidding. Such strategic bidding may result in a higher market price (thus benefiting the generators) and reduced social welfare. 
\begin{figure}
	\centering
	\includegraphics[width=\linewidth]{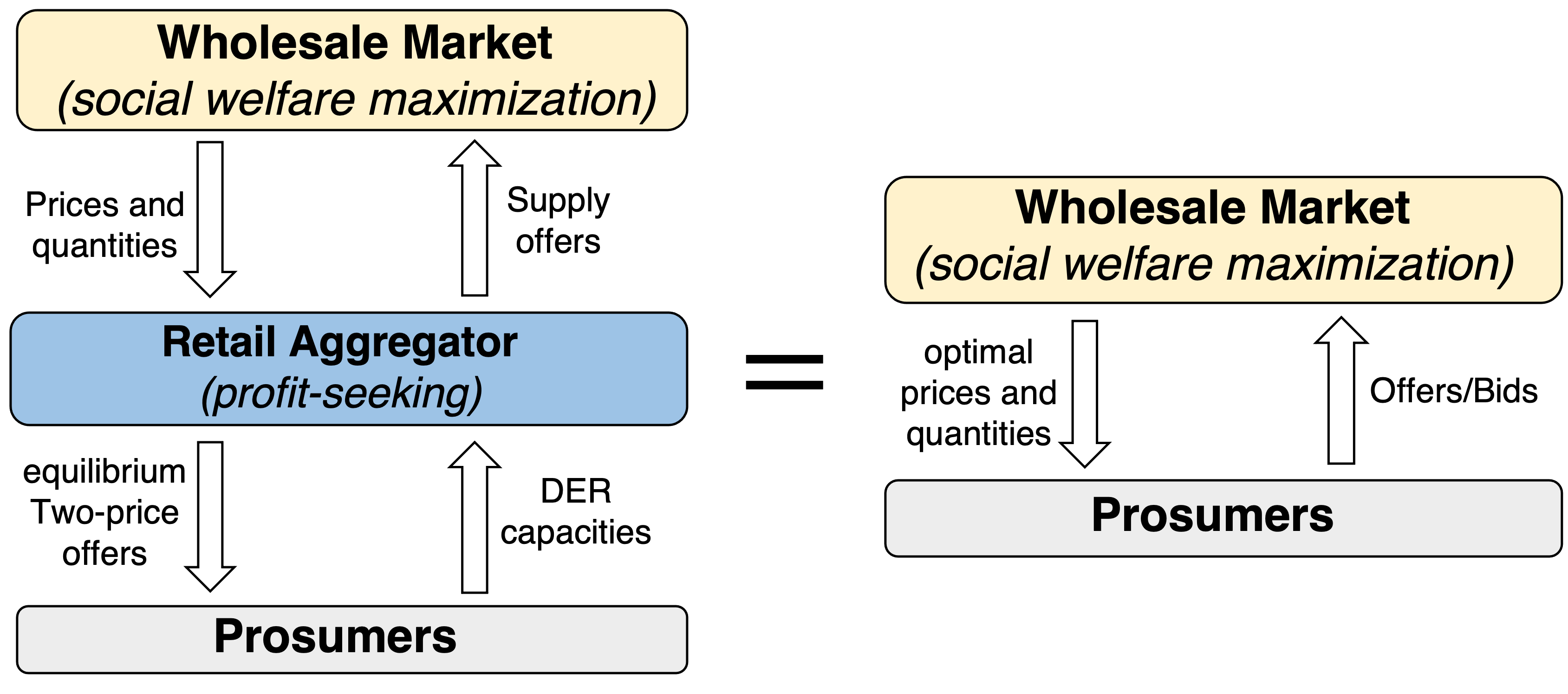}
	\caption{Via the efficient DER aggregation proposed in \cite{9683117}, direct DER participation (benchmark case) is equivalent to having a profit-seeking monopolistic aggregator. }
	\label{sketch}
\end{figure}

The ability of the generators to influence the market price is referred to as the \emph{market power} of the generators, which could negatively affect the social welfare~\cite{al2017understanding}. Understanding market power of generators has been, and still is, an active area of research \cite{marketpower1,marketpower2,marketpower3,marketpower4,marketpower5}. While some articles focus on country-specific issues \cite{marketpower2,marketpower4,marketpower6,marketpower7,marketpower8,marketpower9,marketpower10,marketpower11}, it is evident that understanding market power and potential price manipulations is of strong interest, especially with DERs being intermittent and uncontrollable \cite{DERpower,martinez2016impact,AlshehriBoseBasarIJEPES, cai2020inefficiency}. As DER adoption increases rapidly, it becomes critical to understand the effect of DER integration on the market power of conventional generators, which is, to the best of our knowledge, an issue with plenty of open questions to address.  Hence, in this paper, we study the {market power} of the generators when there is no DER aggregation (no prosumer participation) and when prosumers fully participate (either directly or via efficient aggregation) in the wholesale market. Specifically, we address the following question: {\em When DERs are aggregated, can the market power of conventional power generators be mitigated? If yes, to what extent?}  We will show (both qualitatively and quantitatively) that, compared to no prosumer participation, the market power of generators is mitigated, and we will quantify this reduction in terms of social welfare. 
In particular, we prove that under strategic bidding, the social welfare is higher under efficient aggregation, compared to the case when DERs are not integrated. We also prove that the welfare gap between truthful bidding and strategic bidding is smaller when DERs are aggregated, compared to the case where there are no DERs. Finally, we provide quantifications of such differences.

The paper is organized as follows: Section~\ref{sec:pre} discusses the motivation and summarizes the main result of the paper, which we will gradually prove throughout the paper. In Section~\ref{sec:full}, we quantify the equilibrium quantities of the efficient DER aggregation model (or, equivalently, the benchmark case of direct prosumer participation in wholesale markets),  and then, in Section~\ref{sec:no}, we do the same for the for the case in which there is no DER participation.  We offer a theoretical discussion along with explicit quantification of market power mitigation from efficient DER aggregation  in Section~\ref{sec:dis}. Then, we conduct numerical experiments in Section~\ref{sec:ex}, where we provide deeper insights. The paper concludes in Section~\ref{sec:con} with a summary and suggestions for future research. 
Key proofs are provided in the Appendix, while others are omitted due to space limits, but can be found in~\cite{gao2021aggregating}. 



\section{MOTIVATION AND MAIN RESULT}\label{sec:pre}

Suppose there are~$n$ prosumers, indexed by~$i$, and~$N$ generators, indexed by~$j$.  For ease of exposition, we restrict the attention to one node (location).
Prosumer~$i$ is equipped with capacity~$C_i$ of electricity generation, and makes a decision~$z_i$, which is the \emph{net} amount of energy bought, to maximize her total payoff. Generator~$j$ chooses the production amount~$y_j$, which incurs a cost~$c_j(y_j)$, to maximizes its total profit. We also make the following assumptions on prosumers' utility of consumption and generators' cost functions.

\begin{assumption}\label{assum:power}
	For each prosumer~$i$, the utility of consumption is quadratic and it is given by $u_i(C_i+z_i) = a_i(C_i+z_i)^2+b_i(C_i+z_i)$, where $a_i<0$ and $b_i\gg |a_i|$ so that the market price always falls into the range $\left(0, b_i\right)$. Each generator~$j$'s (true) cost function is linear in its production, and all generators have the same cost function, i.e., $c_j(y_j) = \alpha y_j$ for some $\alpha>0$, and the optimal total supply always satisfies~$y>0$. Furthermore, there exists at least one prosumer~$i$ such that $2a_iC_i+b_i> \alpha$.
\end{assumption}

As we will see in the rest of this paper, Assumption~\ref{assum:power} enables us to obtain explicit expressions for the generators' supply and the social welfare, so that we can characterize the generators' market power quantitatively. In the following, we denote by~$\Wcal^T$ (resp., $\Wcal^S$) the optimal social welfare of the model with prosumer participation when generators bid truthfully (resp., strategically). When no prosumer participation is allowed, the optimal social welfares under truthful bidding and strategic bidding of the generators are denoted by $\Wcal^{TN}$ and $\Wcal^{SN}$, respectively. Our first main result in this paper is summarized as the following theorem.

\begin{theorem}[Main Result]\label{thm:power}
	Under Assumption~\ref{assum:power}, the following inequalities hold:
	\begin{subequations}
		\begin{align}
			&\Wcal^T  \ge \Wcal^{TN}, \ \Wcal^S \ge \Wcal^{SN}, \label{eq:thm1a}\\ 
			&\Wcal^T \ge \Wcal^{S}, \ \Wcal^{TN} \ge \Wcal^{SN},\label{eq:thm1b}\\
			&\Wcal^{TN} - \Wcal^{SN} \ge \Wcal^T - \Wcal^{S}\label{eq:thm1c}
		\end{align}
	\end{subequations}
\end{theorem}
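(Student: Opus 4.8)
The plan is to reduce all three parts \eqref{eq:thm1a}--\eqref{eq:thm1c} to a single scalar comparison of market-clearing prices. Because every generator has the same linear cost $\alpha y_j$ and every prosumer the quadratic utility of Assumption~\ref{assum:power}, each of the four scenarios is completely summarized by its market-clearing price $\beta$: prosumer $i$'s optimal consumption is the solution $q_i(\beta)=(b_i-\beta)/(2|a_i|)$ of $u_i'(q_i)=\beta$, the aggregate quantity the generators must serve is $\sum_i q_i(\beta)-\sum_i C_i$ with prosumer participation and $\sum_i q_i(\beta)$ without it, and the realized social welfare is obtained by substituting these into $\sum_i u_i(q_i(\beta))-\alpha\cdot(\text{aggregate production})$. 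In the two truthful scenarios the clearing price equals the true marginal cost, $\beta=\alpha$. In the two strategic scenarios I would solve the generators' bidding game — exactly the computations carried out in Section~\ref{sec:full} and Section~\ref{sec:no} — to obtain equilibrium clearing prices $\beta^S$ (with prosumers) and $\beta^{SN}$ (without); the clauses ``$y>0$'' and ``$\exists\, i:\, 2a_iC_i+b_i>\alpha$'' in Assumption~\ref{assum:power} are what make these equilibria interior and keep the market non-degenerate.

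From the resulting closed forms I would extract two structural facts. First, for any fixed clearing price $\beta$ the with-prosumer welfare exceeds the without-prosumer welfare by exactly the constant $\alpha\sum_iC_i$, since the DER capacity enters welfare only as avoided production cost. Second, in either case the realized welfare is a strictly concave quadratic in $\beta$ maximized at $\beta=\alpha$, with loss relative to that maximum equal to $\tfrac12\big(\sum_i\tfrac1{2|a_i|}\big)(\beta-\alpha)^2$. Writing $W_N(\cdot)$ for the without-prosumer welfare as a function of the clearing price, these give $\Wcal^T=\Wcal^{TN}+\alpha\sum_iC_i$, $\ \Wcal^T-\Wcal^S=\tfrac12\big(\sum_i\tfrac1{2|a_i|}\big)(\beta^S-\alpha)^2$, $\ \Wcal^{TN}-\Wcal^{SN}=\tfrac12\big(\sum_i\tfrac1{2|a_i|}\big)(\beta^{SN}-\alpha)^2$, and $\Wcal^S=W_N(\beta^S)+\alpha\sum_iC_i$ with $\Wcal^{SN}=W_N(\beta^{SN})$.

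All three parts then follow from the price ordering $\alpha\le\beta^S\le\beta^{SN}$. That $\beta^S\ge\alpha$ and $\beta^{SN}\ge\alpha$ — strategic bidding cannot push the price below marginal cost — comes from the explicit equilibrium formulas together with $y>0$. The substantive ``mitigation'' inequality is $\beta^S\le\beta^{SN}$: prosumer participation shifts the demand curve faced by the generators inward by the constant $\sum_iC_i$ without altering its slope, which lowers the equilibrium markup, and I would verify it by directly comparing the two closed-form prices. Granting the ordering: \eqref{eq:thm1b} is immediate because $\beta=\alpha$ maximizes welfare in both regimes; the first inequality in \eqref{eq:thm1a} is $\alpha\sum_iC_i\ge0$, and the second follows since $W_N$ is decreasing on $[\alpha,\infty)$, so $\Wcal^S=W_N(\beta^S)+\alpha\sum_iC_i\ge W_N(\beta^{SN})=\Wcal^{SN}$; and \eqref{eq:thm1c} reduces to $(\beta^{SN}-\alpha)^2\ge(\beta^S-\alpha)^2$, which holds because $0\le\beta^S-\alpha\le\beta^{SN}-\alpha$.

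I expect the only real work to be the two strategic scenarios: formulating the generators' bidding game under the efficient-aggregation mechanism, establishing an equilibrium and its clearing price in closed form, and — the crux — proving the monotone comparison $\beta^S\le\beta^{SN}$, i.e.\ that the aggregate DER capacity $\sum_iC_i$ pulls the strategic price toward the competitive level $\alpha$. Everything after that is bookkeeping with concave quadratics.
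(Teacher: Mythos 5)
Your reduction works cleanly on the full-participation side --- the identity $\Wcal^T-\Wcal^S=\tfrac12\bigl(\sum_i\tfrac{1}{2|a_i|}\bigr)(\lambda^S-\alpha)^2$ is exactly the paper's \eqref{eq:TS}, and the price ordering $\alpha\le\lambda^S\le\lambda^{SN}$ is correct --- but it breaks on the no-participation side because you have misread that model. In the paper, a prosumer barred from participation still owns and self-consumes her capacity: she chooses $z_i\ge 0$ and consumes $C_i+z_i$, losing only the ability to sell surplus back. The generators therefore serve $\sum_i[q_i(\beta)-C_i]^+$, not $\sum_i q_i(\beta)$. Consequently the two regimes do \emph{not} differ by the constant $\alpha\sum_iC_i$: the correct relation is \eqref{eq:TTN}, $\Wcal^T-\Wcal^{TN}=-\sum_{\{i\mid z_i^T\le 0\}}a_i\left(z_i^T\right)^2$, a deadweight loss incurred only by prosumers who would like to sell but cannot. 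Likewise the residual inverse demand faced by strategic generators is not a parallel shift of the full-participation one: it is kinked at the thresholds $y^i$ and its slope is $1/\sum_{i\in\mathcal S}\tfrac{1}{2a_i}$ over the \emph{active} set only (this is precisely why the paper needs Lemmas~\ref{lem:ucondiff} and~\ref{lem:yi} and the indicator functions in $\lambda^{SN}(y)$). The conclusion $\lambda^S\le\lambda^{SN}$ survives, but for a different reason than you give: the no-participation intercept is a weighted average of $2a_iC_i+b_i$ over the highest-valuation prosumers only.

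This misreading invalidates the two cross-regime steps. Your loss formula $\Wcal^{TN}-\Wcal^{SN}=\tfrac12\bigl(\sum_i\tfrac{1}{2|a_i|}\bigr)(\lambda^{SN}-\alpha)^2$ has the wrong curvature (the sum should run over the active set) and omits the nonnegative term $\sum_{\{i\mid z_i^{SN}=0,\,z_i^T>0\}}(-a_i)\left(z_i^T\right)^2$ contributed by prosumers priced out between the truthful and strategic equilibria; compare \eqref{eq:TNSN}. Because the correct curvature coefficient for the no-participation loss is \emph{smaller} than the full-participation one, the price comparison $(\lambda^{SN}-\alpha)^2\ge(\lambda^S-\alpha)^2$ alone does not yield \eqref{eq:thm1c}; the paper closes this gap with a separate revealed-profit inequality comparing $\delta^N\sum_iz_i^{SN}$ with $\delta\sum_iz_i^S$. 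Similarly, $\Wcal^S$ and $\Wcal^{SN}$ are not values of a single function $W_N$ evaluated at two prices (the active sets and the self-consumption terms differ), so the monotonicity argument for $\Wcal^S\ge\Wcal^{SN}$ collapses as well. The paper instead expands all four welfares around the no-market benchmark $\Wcal_0=\sum_iu_i(C_i)$ (Proposition~\ref{prop:usingsw0}) and compares them term by term with explicit active-set bookkeeping; to salvage your price-based route you would need to carry the piecewise-quadratic welfare and the extra deadweight terms through all of these comparisons.
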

In Theorem~\ref{thm:power}, the first two inequalities~\eqref{eq:thm1a} state that the optimal social welfare with prosumer participation is always greater than that without prosumer participation, for both truthful bidding and strategic bidding of the generators. The next two inequalities~\eqref{eq:thm1b} state that the optimal social welfare under truthful bidding is always greater than that under strategic bidding of generators, for both cases with prosumer participation and without prosumer participation. Finally, the last inequality~\eqref{eq:thm1c} imply that the loss of social welfare due to strategic bidding of the generators are reduced when there is prosumer participation, compared to the case when there is no prosumer participation.

For the rest of this paper, we will provide a complete analysis of Theorem~\ref{thm:power}, along with the explicit analytical expressions for the social welfares and market prices under all four models. 

\section{FULL PROSUMER PARTICIPATION}\label{sec:full}
We first consider the case when there is prosumer participation. As shown in~\cite{9683117}, given a  market price, the prosumers' decisions, as well as the social welfare, under the aggregation model are the same as those as if they participate directly in the wholesale market. Therefore, we will without loss of generality ignore the role of the aggregator and assume the direct participation of prosumers. We analyze the decisions of each party for the cases when generators bid truthfully and strategically.

\subsection{Truthful bidding of the generators}

\subsubsection{Prosumers} Let $\lambda^T$ be the market price when generators bid truthfully. Each prosumer~$i$ solves her payoff maximization problem:
\small
\begin{align}
	\max_{z_i>-C_i} u_i(C_i+z_i) - \lambda^T z_i.
\end{align}
\normalsize
Under Assumption~\ref{assum:power}, prosumer~$i$'s optimal response~$z_i^T$ satisfies the first order condition:
$
2a_i(C_i+z_i^T) + b_i = \lambda^T.
$
%
%
\subsubsection{System operator} The system operator solves the economic dispatch problem to maximize the social welfare~$\Wcal^T$:
\small
\begin{equation}\label{eq:sop}
	\begin{aligned}
		\Wcal^T = &\max_{z_i>-C_i, y_j\ge 0}\sum_iu_i(C_i+z_i) - \sum_{j}c_j(y_j)\\
		&\qquad \text{s.t. }\quad  \sum_jy_j = \sum_iz_i.
	\end{aligned}
\end{equation}

\normalsize
Since the generators are identical, we may without loss of generality restrict attention to $y_j = y_{j'}, \forall j\ne j'$, i.e., each generator supplies the same amount of energy~$y_j$. Let $y:=\sum_{j}y_j$ and $C:=\sum_iC_i$.
Considering the prosumers' and the system operator's problems, we have the following result.
\begin{proposition}\label{prop:power1}
	Under Assumption~\ref{assum:power}, an optimal solution to~\eqref{eq:sop} is given by
	\small
				\begin{align}
			z_i^T =  - C_i + \frac{\alpha - b_i}{2a_i},\ \forall i,\quad y_j^T = \frac{-C + \sum_{i}\frac{\alpha -b_i}{2a_i}}{N},\ \forall j.
				\end{align}
	\normalsize
	Furthermore, the equilibrium market price (the optimal Lagrange multiplier of the constraint) is given by $\lambda^T = \alpha$.
\end{proposition}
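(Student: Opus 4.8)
The plan is to recognize \eqref{eq:sop} as a convex program and solve it via its Karush--Kuhn--Tucker (KKT) conditions. First I would observe that, under Assumption~\ref{assum:power}, the objective $\sum_i u_i(C_i+z_i) - \sum_j c_j(y_j) = \sum_i\big[a_i(C_i+z_i)^2 + b_i(C_i+z_i)\big] - \alpha\sum_j y_j$ is concave in $(z,y)$ (each $u_i$ is strictly concave since $a_i<0$, and the cost term is linear), the coupling constraint $\sum_j y_j = \sum_i z_i$ is affine, and the remaining constraints $z_i>-C_i$ and $y_j\ge 0$ are box constraints. Hence KKT is necessary and sufficient for global optimality, and it suffices to exhibit a primal--dual pair that satisfies it.

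Next I would form the Lagrangian $L = \sum_i u_i(C_i+z_i) - \alpha\sum_j y_j - \lambda\big(\sum_i z_i - \sum_j y_j\big) + \sum_j \mu_j y_j$, with $\mu_j\ge 0$ the multipliers for $y_j\ge 0$ (the constraints $z_i>-C_i$ will turn out inactive, so they carry no multiplier, which I verify a posteriori). Stationarity in $y_j$ gives $\lambda + \mu_j = \alpha$. Since Assumption~\ref{assum:power} guarantees the optimal total supply satisfies $y=\sum_j y_j>0$, at least one generator has $y_j>0$; complementary slackness forces $\mu_j=0$ for that generator, so $\lambda=\alpha$, and then $\mu_j=\alpha-\lambda=0$ for every $j$. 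This identifies the optimal multiplier of the balance constraint as $\lambda^T=\alpha$, the claimed market price.

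Then stationarity in $z_i$ reads $u_i'(C_i+z_i)-\lambda = 2a_i(C_i+z_i)+b_i-\alpha = 0$, giving $C_i+z_i^T = \tfrac{\alpha-b_i}{2a_i}$, i.e. $z_i^T = -C_i + \tfrac{\alpha-b_i}{2a_i}$. Because $a_i<0$ and $b_i>\alpha$ (a consequence of the market price lying in $(0,b_i)$), we have $\tfrac{\alpha-b_i}{2a_i} = \tfrac{b_i-\alpha}{2|a_i|} > 0 > -C_i$, so each $z_i^T$ is strictly feasible and the constraints $z_i>-C_i$ are indeed inactive, as assumed. Substituting into the balance constraint yields $y=\sum_i z_i^T = -C+\sum_i\tfrac{\alpha-b_i}{2a_i}$, and since the generators are identical I take the symmetric allocation $y_j^T=y/N=\tfrac{-C+\sum_i(\alpha-b_i)/(2a_i)}{N}$, which is nonnegative precisely because $y>0$. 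Thus $(z^T,y^T,\lambda^T)$ satisfies all KKT conditions and is optimal.

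The only genuinely delicate points are bookkeeping: signing the multiplier so that $\lambda$ emerges as the market price $\alpha$ rather than $-\alpha$, and justifying that the inequality constraints are inactive — for $y_j\ge 0$ this rests on the assumed $y>0$, while for $z_i>-C_i$ it follows from $b_i>\alpha$. I would also remark that strict concavity in $z$ makes $z^T$ the unique optimal prosumer profile, whereas the $y_j$'s are pinned down only through $\sum_j y_j=y$, which is why the statement claims "\emph{an} optimal solution."
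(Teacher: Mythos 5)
Your proof is correct and follows essentially the same route as the paper's: form the Lagrangian of the dispatch problem, invoke $y>0$ from Assumption~\ref{assum:power} to kill the multipliers on $y_j\ge 0$ and obtain $\lambda^T=\alpha$, then read off $z_i^T$ from stationarity and $y_j^T$ from the balance constraint under the symmetric allocation. The only difference is cosmetic — you verify a posteriori that $z_i>-C_i$ is inactive rather than carrying its multiplier $\mu_i$ through the KKT system as the paper does.
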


\subsection{Strategic bidding of the generators}
\subsubsection{Prosumers} Let $\lambda^S$ be the market price when generators bid strategically. Then, each prosumer~$i$'s optimal response $z_i^S$ (under Assumption~\ref{assum:power}) now becomes
$
2a_i(C_i+z_i^S) + b_i = \lambda^S.
$

\subsubsection{System operator} The system operator solves the economic dispatch problem to maximize the \emph{apparent social welfare}, which is the ``social welfare" when the system operator assumes the generators' bids are true, but are actually based on the (nontruthful) bidding $\tilde{c}_j$ of the generators:
\small
\begin{equation}\label{eq:sops}
	\begin{aligned}
		&\max_{z_i>-C_i, y_j\ge 0}\sum_iu_i(C_i+z_i) - \sum_{j}\tilde{c}_j(y_j)\\
		&\qquad\text{s.t. }\quad  \sum_jy_j = \sum_iz_i.
	\end{aligned}
\end{equation}
\normalsize

For each possible total supply~$y$, we define the overall utility of consumption as
\small
\begin{align}\label{eq:u}
	u(C+y) = \left\{\max_{z_i>-C_i}\sum_iu_i(C_i+z_i)\ \text{s.t.}\ \sum_iz_i  = y\right\}.
\end{align}
\normalsize
In other words, $u(C+y)$ computes the total utility of consumption of all prosumers when the total energy supply from the generators is~$y$. We can therefore rewrite the system operator's problem equivalently as
\small
\begin{align}
	\max_{y}u(C+y) - \sum_j\tilde{c}_j(y/N).
\end{align}
\normalsize
\subsubsection{Generators}  Each generator sets an optimal~$y_j$ to supply, and bids a tilted cost function~$\tilde{c}_j$ instead of the true cost function~$c_j$. The generator~$j$ considers the market price~$\lambda^S$ as a function of the total supply~$y$, and aims to solve the profit maximization problem:
\small
\begin{align}
	\max_{y_j}\lambda^S\Big(y_j + \sum_{j'\ne j}y_{j'}\Big)\cdot y_j -c_j(y_j).
\end{align}
\normalsize
To this end, the generator needs to compute the market price as a function of total supply. Since the total supply and the total (net) demand are matched by the system operator, we have that
$
\sum_iz_i^S = y.
$
This, together with prosumer's optimal condition $2a_i(C_i+z_i^S) + b_i = \lambda^S$, implies that
\small
\begin{align}
	\lambda^S(y) = \frac{y+C+\sum_i\frac{b_i}{2a_i}}{\sum_i\frac{1}{2a_i}}.
\end{align}
\normalsize
Therefore, the generator's profit maximization problem becomes:
\small
\begin{align}\label{eq:genps1}
	\max_{y_j\ge 0} \frac{y_j+\sum_{j'\ne j}y_{j'}+C+\sum_i\frac{b_i}{2a_i}}{\sum_i\frac{1}{2a_i}}\cdot y_j - \alpha y_j.
\end{align}
\normalsize
Each generator solves~\eqref{eq:genps1}, and since they are all identical, we only look at the equilibrium where $y_j = y_{j'},\forall j,j'\in[N]$. Thus, from the first-order condition of~\eqref{eq:genps1}, we obtain the optimal supply amount for generator~$j$:
\small
\begin{align}\label{eq:yjs}
	y_j^S = \frac{-C+\sum_i\frac{\alpha - b_i}{2a_i}}{N+1},\quad \forall j.
\end{align}
\normalsize
In order to sell~$y_j^S$ as given in~\eqref{eq:yjs}, generator~$j$ will bid~$\tilde{c}_j$ such that the system operator will allocate~$y_j^S$ amount of energy to~$j$. The generator thus considers the economic dispatch problem that the system operator solves.
Therefore, the generator will bid~$\tilde{c}_j$ such that~$y^S:= \sum_jy_j^S = Ny_j^S$ solves the system operator's problem optimally, i.e.,
\small
\begin{align}\label{eq:bidscon}
	\frac{\partial u(C+y)}{\partial y}\bigg|_{y=Ny_j^S} = \frac{\partial \tilde{c}_j(y_j)}{\partial y_j}\bigg|_{y_j = y_j^S}.
\end{align}
\normalsize
While there are many possible choices of~$\tilde{c}_j$ that satisfies~\eqref{eq:bidscon}, one of the optimal bids for the generator is to bid a linear cost function.
\begin{lemma}\label{lem:bids}
	The following linear cost function is an optimal bidding strategy for  generator~$j$:
	\small
	\begin{align}\label{eq:bids}
		\tilde{c}_j(y_j) 
		= \frac{N\alpha\sum_i\frac{1}{2a_i} + C +\sum_i\frac{b_i}{2a_i}}{(N+1)\sum_i\frac{1}{2a_i}}\cdot y_j
	\end{align}
\normalsize
\end{lemma}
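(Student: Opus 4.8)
The plan is to verify that the proposed linear bid satisfies the dispatch-matching condition~\eqref{eq:bidscon}, since once this holds the system operator's economic dispatch allocates exactly $y_j^S$ to generator~$j$: the objective $u(C+y)-\sum_j\tilde c_j(y_j)$ is concave ($u$ inherits concavity from the $u_i$'s because $a_i<0$, and a linear $\tilde c_j$ is convex), so the first-order condition is sufficient, and restricting to the symmetric allocation already adopted in the text pins down each generator's share as $y_j^S$. Since $y_j^S$ in~\eqref{eq:yjs} is by construction the maximizer of the generator's profit~\eqref{eq:genps1}, inducing it is optimal for generator~$j$; hence the lemma reduces to checking~\eqref{eq:bidscon}, which is a short derivative computation.

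For the right-hand side of~\eqref{eq:bidscon}, a linear bid $\tilde c_j(y_j)=\beta y_j$ has constant marginal $\partial\tilde c_j/\partial y_j\equiv\beta$, so the condition becomes simply $\beta=\left.\partial u(C+y)/\partial y\right|_{y=Ny_j^S}$. For the left-hand side I would invoke the envelope theorem on the concave program~\eqref{eq:u}: under Assumption~\ref{assum:power} the constraints $z_i>-C_i$ are slack at the optimum (the price lies in $(0,b_i)$), so $\partial u(C+y)/\partial y$ equals the multiplier of $\sum_i z_i=y$, i.e.\ the common marginal utility $2a_i(C_i+z_i)+b_i$. This is exactly the market price viewed as a function of total supply, and combining $\sum_i z_i^S=y$ with $2a_i(C_i+z_i^S)+b_i=\lambda^S$ reproduces the expression already derived in the text, $\lambda^S(y)=\big(y+C+\sum_i\tfrac{b_i}{2a_i}\big)\big/\sum_i\tfrac{1}{2a_i}$. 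Thus the left-hand side of~\eqref{eq:bidscon} is $\lambda^S(Ny_j^S)$.

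It then remains to substitute $Ny_j^S=\tfrac{N}{N+1}\big(-C+\sum_i\tfrac{\alpha-b_i}{2a_i}\big)$ and simplify. Writing $S_1:=\sum_i\tfrac{1}{2a_i}$ and $S_b:=\sum_i\tfrac{b_i}{2a_i}$, so that $\sum_i\tfrac{\alpha-b_i}{2a_i}=\alpha S_1-S_b$, the numerator telescopes:
\begin{equation*}
Ny_j^S+C+S_b=\frac{N(\alpha S_1-S_b-C)+(N+1)(C+S_b)}{N+1}=\frac{N\alpha S_1+C+S_b}{N+1},
\end{equation*}
whence $\lambda^S(Ny_j^S)=\dfrac{N\alpha S_1+C+S_b}{(N+1)S_1}$, which is precisely the coefficient appearing in~\eqref{eq:bids}. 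Setting $\beta$ equal to this value makes~\eqref{eq:bidscon} hold, so the stated $\tilde c_j$ induces the allocation $y_j^S$; since its slope equals the equilibrium price $\lambda^S>0$ (positive by Assumption~\ref{assum:power}), it is a legitimate increasing cost bid. Together with the optimality of $y_j^S$ in~\eqref{eq:genps1}, this proves the lemma.

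I do not expect a substantive obstacle here — the content is the algebra, which collapses cleanly thanks to the telescoping above. The one step worth stating carefully is the identification $\partial u(C+y)/\partial y=\lambda^S(y)$: this rests on the envelope theorem for the concave maximization~\eqref{eq:u} together with the observation that the capacity constraints $z_i>-C_i$ are inactive, after which everything is a one-line substitution.
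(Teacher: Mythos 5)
Your proposal is correct and follows essentially the same route as the paper: both reduce the lemma to verifying the matching condition~\eqref{eq:bidscon}, identify $\partial u(C+y)/\partial y$ with the price function $\lambda^S(y)=\bigl(y+C+\sum_i\tfrac{b_i}{2a_i}\bigr)/\sum_i\tfrac{1}{2a_i}$, and evaluate it at $y=Ny_j^S$ to recover the coefficient in~\eqref{eq:bids}. The only difference is presentational --- you obtain the derivative via the envelope theorem while the paper substitutes the optimal $z_i$ into $u(C+y)$ and differentiates explicitly --- and your telescoping algebra matches the paper's final simplification exactly.
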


Considering the prosumers' and the system operator's problems, we have the following result.
\begin{proposition}[Competitive Equilibrium]\label{prop:power2}
	Under Assumption~\ref{assum:power}, if all generators bid as in~\eqref{eq:bids}, then, an optimal solution to~\eqref{eq:sops} is given by
	\small
				\begin{align}
						z_i^S =  - C_i + \frac{\lambda^S - b_i}{2a_i},\ \forall i,\quad
						y_j^S = \frac{-C + \sum_{i}\frac{\alpha -b_i}{2a_i}}{N+1},\ \forall j,\label{eq:zsysys}
					\end{align}
\normalsize
		where the equilibrium market price (the optimal Lagrange multiplier of the equality constraint) is
		\small
		$$
		\lambda^S
		= \frac{N\alpha}{N+1} + \frac{C+\sum_i\frac{b_i}{2a_i}}{(N+1)\sum_i\frac{1}{2a_i}}.
		$$
		\normalsize
		Furthermore, the solutions in~\eqref{eq:zsysys} optimally solve the prosumer's/generator's problem.


	\end{proposition}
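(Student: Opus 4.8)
The plan is to solve the system operator's program \eqref{eq:sops} with the linear bids of Lemma~\ref{lem:bids} plugged in, read off the market price as the multiplier on the balance constraint, and then check that the resulting allocation is simultaneously a best response for every prosumer and every generator. Write $\beta$ for the common slope appearing in \eqref{eq:bids}. Substituting $\tilde c_j(y_j)=\beta y_j$ into \eqref{eq:sops} and using $\sum_j y_j=\sum_i z_i$ to eliminate the generation variables, the objective collapses to $\sum_i\big(u_i(C_i+z_i)-\beta z_i\big)$, which is separable over prosumers and, since $a_i<0$, strictly concave in each $z_i$. Its first-order condition $2a_i(C_i+z_i^S)+b_i=\beta$ gives $z_i^S=-C_i+(\beta-b_i)/(2a_i)$, and the bound $z_i>-C_i$ is inactive because $\beta$ (being the market price) lies in $(0,b_i)$ by Assumption~\ref{assum:power}; so this $z^S$ maximizes \eqref{eq:sops}.

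Next I would form the Lagrangian of \eqref{eq:sops} with multiplier $\lambda$ on $\sum_j y_j=\sum_i z_i$: stationarity in $y_j$ forces $\lambda^S=\beta$, and a one-line rearrangement of the numerator of $\beta$ in \eqref{eq:bids} produces the claimed closed form $\lambda^S=\tfrac{N\alpha}{N+1}+\tfrac{C+\sum_i b_i/(2a_i)}{(N+1)\sum_i 1/(2a_i)}$. Summing $z_i^S$ over $i$ and substituting this $\lambda^S$ yields $\sum_i z_i^S=\tfrac{N}{N+1}\big(-C+\sum_i\tfrac{\alpha-b_i}{2a_i}\big)$, which is exactly $N y_j^S$ for $y_j^S$ as in \eqref{eq:zsysys}; hence the dispatch is feasible, and $y_j^S\ge 0$ is inactive under the standing assumption that optimal total supply is positive. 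This establishes the stated primal optimizer and multiplier for \eqref{eq:sops}.

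It remains to close the equilibrium loop, which is where the only real subtlety lies. The identity $2a_i(C_i+z_i^S)+b_i=\lambda^S$ is precisely the prosumer's first-order optimality condition at price $\lambda^S$, so $z_i^S$ solves each prosumer's payoff problem. For the generators, because every $\tilde c_j$ is linear with the \emph{same} slope, the system operator's dispatch only pins down the \emph{aggregate} supply and is indifferent to how it is split; so one must check that the aggregate it procures, $\sum_i z_i^S$, equals $N$ times each generator's individually optimal quantity $y_j^S$ from \eqref{eq:yjs} — which is exactly the identity verified above, and is the reason the bid slope in Lemma~\ref{lem:bids} was chosen as it was. Finally, substituting the competitors' symmetric choice into \eqref{eq:genps1} leaves a scalar quadratic in $y_j$ with leading coefficient $\big(\sum_i\tfrac{1}{2a_i}\big)^{-1}<0$, hence strictly concave, so its first-order condition \eqref{eq:yjs} is the unique best response and coincides with $y_j^S$, completing the argument.
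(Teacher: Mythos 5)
Your proposal is correct and follows essentially the same route as the paper: plug the linear bids of Lemma~\ref{lem:bids} into the system operator's problem, identify the market price with the multiplier on the balance constraint (which equals the common bid slope), and then verify that the resulting $z_i^S$ and $y_j^S$ satisfy the prosumers' first-order conditions and the generators' best-response condition~\eqref{eq:yjs}. Your additional checks (strict concavity of the reduced problem and of~\eqref{eq:genps1}, and inactivity of the bounds $z_i>-C_i$ and $y_j\ge 0$ under Assumption~\ref{assum:power}) are details the paper's KKT argument passes over implicitly, not a different method.
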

	
	\section{NO PROSUMER PARTICIPATION}\label{sec:no}
	We next consider the case when the prosumers cannot sell back to the grid, i.e., each prosumer~$i$ can only purchase some $z_i\ge 0$ amount of energy. We first look at the prosumers' problem.
	
	\subsubsection*{Prosumers} Let $\lambda$ be the market price. Each prosumer~$i$ solves her payoff maximization problem:
	$
	\max_{z_i\ge 0} u_i(C_i+z_i) - \lambda z_i = \max_{z_i\ge 0}a_i(C_i+z_i)^2+b_i(C_i+z_i) - \lambda z_i.
	$
	The optimal decision of prosumer~$i$ is thus
	\small
	\begin{align}\label{eq:disn}
		z_i = \left[\frac{\lambda-b_i}{2a_i}-C_i\right]^+.
	\end{align}
\normalsize
	Therefore, given any market price~$\lambda$, the set of prosumers who make a strictly positive amount of purchase is
	$
	\mathcal{S}(\lambda) := \left\{i\mid 2a_iC_i+b_i>\lambda\right\}.
	$
	We will without loss of generality sort the prosumers in decreasing order of~$2a_iC_i+b_i$, i.e., for any two prosumers~$i$ and~$i'$, we have that $2a_iC_i+b_i\ge 2a_{i'}C_{i'}+b_{i'}$ if $i\le i'$. Under such ordering, for any $i\le i'$, if prosumer~$i'$ makes a positive purchase of electricity, then prosumer~$i$ must also make a positive purchase. This sorting of prosumers will be helpful when we write the social welfare.
	
	\subsection{Truthful bidding of the generators}
	
	\subsubsection{System operator} The system operator solves the economic dispatch problem that maximizes the social welfare:
	\small
	\begin{equation}\label{eq:sopn}
		\begin{aligned}
			\Wcal^{TN} = &\max_{z_i\ge 0, y_j\ge 0}\sum_iu_i(C_i+z_i) - \sum_{j}c_j(y_j)\\
			&\qquad \text{s.t. }\quad  \sum_jy_j = \sum_iz_i.
		\end{aligned}
	\end{equation}
\normalsize
	Since the generators are identical, we again restrict solutions to $y_j = y_{j'}, \forall j\ne j'$, i.e., each generator supplies the same amount of energy~$y_j$.
	Considering the prosumers' and the system operator's problems, we have the following result.
	\begin{proposition}\label{prop:power1n}
		Under Assumption~\ref{assum:power}, an optimal solution to~\eqref{eq:sopn} is given by
		\small
			\begin{subequations}
					\begin{align}
							z_i^{TN} &=  \left[- C_i + \frac{\alpha - b_i}{2a_i}\right]^+,\quad \forall i,\\
							y_j^{TN} &= \frac{\sum_i\left[-C_i+\frac{\alpha-b_i}{2a_i}\right]^+}{N},\quad \forall j.
						\end{align}
				\end{subequations}
\normalsize
		Furthermore, the equilibrium market price (the optimal Lagrange multiplier of the constraint) is given by
			$\lambda^{TN} = \alpha.$
	\end{proposition}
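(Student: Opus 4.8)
The plan is to exploit the structure created by the identical linear costs: on the feasible set of \eqref{eq:sopn} the balance constraint gives $\sum_j c_j(y_j)=\alpha\sum_j y_j=\alpha\sum_i z_i$, so the generator variables can be eliminated and the economic dispatch problem collapses to the separable program
$\max_{z_i\ge 0}\sum_i\bigl[u_i(C_i+z_i)-\alpha z_i\bigr]$,
which decouples into $n$ independent scalar maximizations, one per prosumer (this is the no‑buy‑back analogue of the reduction implicitly used in Proposition~\ref{prop:power1}). Feasibility is not an issue since $\sum_i z_i\ge 0$ and one may always set $y_j=\tfrac1N\sum_i z_i\ge 0$.

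Next I would solve each prosumer's subproblem $\max_{z_i\ge 0}a_i(C_i+z_i)^2+b_i(C_i+z_i)-\alpha z_i$. Because $a_i<0$ the objective is strictly concave, so the unconstrained first‑order condition $2a_i(C_i+z_i)+b_i=\alpha$, i.e. $z_i=\frac{\alpha-b_i}{2a_i}-C_i$, identifies the global maximizer; projecting onto $z_i\ge 0$ (valid by concavity) yields $z_i^{TN}=\bigl[-C_i+\frac{\alpha-b_i}{2a_i}\bigr]^+$, which is exactly the individual best response \eqref{eq:disn} evaluated at price $\alpha$. Summing over $i$ gives the total supply $y=\sum_i z_i^{TN}$, and since every split of $y$ among the identical generators attains the same minimal aggregate cost $\alpha y$, the symmetric choice $y_j^{TN}=y/N$ is optimal — this is the claimed expression.

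Finally, for the equilibrium price I would identify the Lagrange multiplier $\lambda^{TN}$ of the balance constraint from the stationarity conditions of \eqref{eq:sopn}: differentiating the Lagrangian with respect to $y_j$ gives $-\alpha+\lambda^{TN}=0$ (the nonnegativity multiplier on $y_j$ vanishes because $y_j^{TN}>0$), hence $\lambda^{TN}=\alpha$, and consistency with \eqref{eq:disn} confirms the prosumers' decisions. The one step that genuinely needs Assumption~\ref{assum:power}, and which I would spell out, is that the constraint is active with $y>0$: the prosumer $i$ with $2a_iC_i+b_i>\alpha$ satisfies $\frac{\alpha-b_i}{2a_i}-C_i>0$ (dividing by $2a_i<0$ reverses the inequality), so $z_i^{TN}>0$ and therefore $y=\sum_i z_i^{TN}>0$, justifying $y_j^{TN}>0$ and the multiplier computation. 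Everything else is routine, so the only mild obstacle is the careful bookkeeping of the $[\,\cdot\,]^+$ projections and the activity of the constraint.
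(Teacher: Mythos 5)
Your proof is correct. The paper proves Proposition~\ref{prop:power1n} by writing the Lagrangian of \eqref{eq:sopn} in the joint variables $(z,y)$ and verifying the full KKT system: Assumption~\ref{assum:power} gives $y_j>0$, hence the multiplier $\nu_j=0$ and $\lambda^{TN}=\alpha$ from stationarity in $y_j$; stationarity in $z_i$ together with complementary slackness then yields $z_i^{TN}=\left[-C_i+\frac{\alpha-b_i}{2a_i}\right]^+$, and the balance constraint gives $y_j^{TN}$. You instead eliminate the generator variables first, using the identical linear costs and the balance constraint to replace $\sum_j c_j(y_j)$ by $\alpha\sum_i z_i$, which collapses the dispatch problem into $n$ independent strictly concave scalar maximizations whose projected first-order conditions give the same $z_i^{TN}$ directly; you only invoke stationarity in $y_j$ at the end to read off the multiplier. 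The two arguments rest on the same first-order computation, but your elimination makes global optimality immediate (each scalar concave subproblem is solved exactly, with no need to argue sufficiency of KKT for the joint program), and you correctly isolate the one place the assumption is genuinely needed: the existence of a prosumer with $2a_iC_i+b_i>\alpha$ forces $\sum_i z_i^{TN}>0$, hence $y_j^{TN}>0$, which is what kills $\nu_j$ and pins $\lambda^{TN}=\alpha$.
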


	\subsection{Strategic bidding of the generators}
	\subsubsection{System operator} The system operator solves the economic dispatch problem to maximize the \emph{apparent social welfare}, which is the ``social welfare" when the system operator assumes the generators' bids are true, but are again actually based on the (nontruthful) bidding $\tilde{c}_j$ of the generators: 
	\small
	\begin{equation}\label{eq:sopns}
		\begin{aligned}
			&\max_{z_i\ge 0, y_j\ge 0}\sum_iu_i(C_i+z_i) - \sum_{j}\tilde{c}_j(y_j)\\
			&\qquad \text{s.t. }\quad  \sum_jy_j = \sum_iz_i
		\end{aligned}
	\end{equation}
	\normalsize
	
	For each possible total supply~$y$, the overall utility of consumption now becomes
	\small
	\begin{align}\label{eq:uc+yn}
		u(C+y) = \left\{\max_{z_i\ge 0}\sum_iu_i(C_i+z_i)\ \text{s.t.}\ \sum_iz_i  = y\right\}.
	\end{align}
\normalsize
	As $y$ increases, the number of prosumers with $z_i >0$ will change in a discrete manner. However, we have the following useful result.
	\begin{lemma}\label{lem:ucondiff}
		$u(C+y)$ is continuous and differentiable in~$y$.
	\end{lemma}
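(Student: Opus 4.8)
The plan is to show that $u(C+y)$ defined in~\eqref{eq:uc+yn} is the optimal value of a parametric concave maximization, and that continuity and differentiability follow from standard sensitivity arguments for such problems, made completely explicit here because the $[\cdot]^+$ truncations in~\eqref{eq:disn} only change which prosumers are ``active.'' First I would fix the sorted ordering of prosumers by decreasing $2a_iC_i+b_i$ already introduced in the text, and observe that for a given $y\ge 0$ the inner maximization $\max_{z_i\ge 0}\sum_i u_i(C_i+z_i)$ subject to $\sum_i z_i = y$ has a unique solution because each $u_i$ is strictly concave (since $a_i<0$). The KKT conditions give a common multiplier $\mu$ with $2a_i(C_i+z_i)+b_i=\mu$ whenever $z_i>0$ and $2a_iC_i+b_i\le\mu$ whenever $z_i=0$; equivalently $z_i=\left[\tfrac{\mu-b_i}{2a_i}-C_i\right]^+$. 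Summing, $y=\sum_i\left[\tfrac{\mu-b_i}{2a_i}-C_i\right]^+$, and since $a_i<0$ the right-hand side is a continuous, strictly decreasing (on the relevant range), piecewise-linear function of $\mu$; hence there is a unique $\mu=\mu(y)$, and $\mu(y)$ is continuous and piecewise-linear in $y$, with breakpoints exactly at the values $y$ where some prosumer becomes active.

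Next I would identify $\mu(y)$ with the derivative. By the envelope theorem for this parametric concave program, $u'(C+y)=\mu(y)$ wherever the active set is locally constant, because on each such interval $u(C+y)$ is a smooth (in fact quadratic) function of $y$ obtained by eliminating the active $z_i$'s via the linear optimality conditions. Continuity of $u(C+y)$ is then immediate: on each interval it is a polynomial, and at a breakpoint the two one-sided expressions agree because the newly activated prosumer has $z_i=0$ there, so adding her to the active set does not change the value. For differentiability at a breakpoint $y_0$, the key point is that the left derivative and right derivative both equal $\mu(y_0)$: the left derivative is $\lim_{y\uparrow y_0}\mu(y)$ and the right derivative is $\lim_{y\downarrow y_0}\mu(y)$, and $\mu(\cdot)$ is continuous, so these coincide. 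Thus $u$ is $C^1$ with $u'(C+y)=\mu(y)$, a continuous function.

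\textbf{Main obstacle.} The delicate step is handling the boundary behavior correctly: at $y=0$ the constraint set collapses to $z_i\equiv 0$, and more importantly one must verify that $\mu(y)$ really is continuous \emph{across} breakpoints rather than merely one-sided continuous — i.e. that the implicit equation $y=\sum_i\left[\tfrac{\mu-b_i}{2a_i}-C_i\right]^+$ defines $\mu$ as a genuine continuous function of $y$. This is where the sign condition $a_i<0$ and the sorted ordering do the work: as $\mu$ decreases past $2a_iC_i+b_i$, prosumer $i$'s term switches on continuously from $0$ (since $\tfrac{\mu-b_i}{2a_i}-C_i=0$ exactly at $\mu=2a_iC_i+b_i$), so the summed function has no jumps, only slope changes, and its strict monotonicity on $\{\mu: \mathcal{S}(\mu)\ne\emptyset\}$ (guaranteed because at least one $1/(2a_i)<0$ term is active) gives a well-defined continuous inverse. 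Once that is in hand, the envelope-theorem identification $u'=\mu(y)$ and hence $C^1$-smoothness follow routinely, so I would keep the write-up focused on the monotonicity/continuity of $\mu(\cdot)$ and state the envelope step briefly.
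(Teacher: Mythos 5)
Your proposal is correct and follows essentially the same route as the paper's proof: both identify the KKT multiplier of the inner program as $u'(C+y)$ and reduce the lemma to showing that this multiplier varies continuously in $y$ as prosumers enter the active set. The only cosmetic difference is that you establish continuity of the multiplier by inverting the continuous, strictly monotone map $\mu\mapsto\sum_i\left[\tfrac{\mu-b_i}{2a_i}-C_i\right]^+$, whereas the paper checks directly that the one-sided values $\lambda_-$ and $\lambda_+$ agree at each critical supply level $y^i$.
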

	We can thus rewrite the system operator's problem equivalently as
	\small
	\begin{align}
		\max_{y\ge 0} u(C+y) - \sum_j\tilde{c}_j(y/N).
	\end{align}
\normalsize

	
	\subsubsection{Generators}  Each generator sets an optimal~$y_j$ to supply, and bids a tilted cost function~$\tilde{c}_j$ instead of the true cost function~$c_j$. The generator~$j$ considers the market price~$\lambda^{SN}$ as a function of the total supply~$y$, and aims to solve the profit maximization problem:
	\small
	\begin{align}\label{eq:genpsn}
		\max_{y_j} \lambda^{SN}\Big(y_j + \sum_{j'\ne j}y_{j'}\Big)\cdot y_j -c_j(y_j).
	\end{align}
\normalsize
	To this end, the generator needs to compute the market price as a function of total supply. Since the total supply and the total (net) demand are matched by the system operator, we have that
	$
	\sum_iz_i^{SN} = y.
	$
	At any given~$y$, we need to consider which prosumers have~$z_i>0$ and which prosumers have~$z_i=0$. For all $i=\{1,2,\ldots,n\}$, define
	\small
	\begin{align}
		y^{i} := \left(2a_{i}C_{i}+b_{i}\right)\sum_{i'=1}^{i-1}\frac{1}{2a_{i'}} - \sum_{i'=1}^{i-1}\left(C_{i'}+\frac{b_{i'}}{2a_{i'}}\right).
	\end{align}
\normalsize
	Then, we have the following lemma.
	\begin{lemma}\label{lem:yi}
		Prosumer~$i$'s optimal decision~$z_i>0$ if and only if the total supply satisfies~$y> y^i$.
	\end{lemma}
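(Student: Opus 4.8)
The plan is to characterize, for a given total supply $y$, the exact set of prosumers with $z_i > 0$ by exploiting the ordering convention introduced just before Proposition~\ref{prop:power1n}: prosumers are sorted in decreasing order of $2a_iC_i + b_i$, so that if prosumer $i'$ purchases a positive amount then so does every prosumer $i \le i'$. Consequently, for each feasible $y$ there is a threshold index $k = k(y)$ such that $\{1,\dots,k\}$ are exactly the active prosumers. The quantity $y^i$ is, I claim, precisely the value of total supply at which prosumer $i$ becomes active (i.e., transitions from $z_i = 0$ to $z_i > 0$), and the lemma is the assertion that $z_i > 0 \iff y > y^i$.

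First I would fix a market price $\lambda$ and write down the active set $\mathcal{S}(\lambda) = \{i \mid 2a_iC_i + b_i > \lambda\}$, which by the sorting is of the form $\{1,\dots,k\}$ for some $k$. For $y$ in the range corresponding to active set $\{1,\dots,k\}$, the market-clearing condition $\sum_{i=1}^k z_i = y$ together with the prosumers' first-order conditions $2a_i(C_i + z_i) + b_i = \lambda$ — i.e. $z_i = \frac{\lambda - b_i}{2a_i} - C_i$ — gives a linear relation between $\lambda$ and $y$: summing over $i \in \{1,\dots,k\}$ yields $y = \lambda \sum_{i=1}^k \frac{1}{2a_i} - \sum_{i=1}^k\left(C_i + \frac{b_i}{2a_i}\right)$. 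Prosumer $i$ is on the verge of becoming active exactly when $\lambda = 2a_iC_i + b_i$; at that price the active set is $\{1,\dots,i-1\}$ (prosumer $i$ contributes $z_i = 0$), so substituting $\lambda = 2a_iC_i + b_i$ and $k = i-1$ into the displayed relation gives precisely $y = (2a_iC_i+b_i)\sum_{i'=1}^{i-1}\frac{1}{2a_{i'}} - \sum_{i'=1}^{i-1}\left(C_{i'} + \frac{b_{i'}}{2a_{i'}}\right) = y^i$.

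To finish, I would argue monotonicity: as $y$ increases, the equilibrium price $\lambda(y)$ is nondecreasing (each $a_i < 0$, so $\sum 1/(2a_i) < 0$ and the linear relation is decreasing in $\lambda$ — meaning larger $y$ forces smaller $\lambda$... I need to be careful with signs here). Since $a_i<0$, a lower price means $\frac{\lambda-b_i}{2a_i}$ is larger, so each active $z_i$ increases and more prosumers cross their activation threshold as $\lambda$ falls, i.e. as $y$ grows. Hence the active set is monotonically increasing in $y$, prosumer $i$ becomes and stays active once $y$ passes $y^i$, and by Lemma~\ref{lem:ucondiff} there is no degenerate jump. Combining, $z_i > 0 \iff y > y^i$.

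The main obstacle I anticipate is the bookkeeping around sign conventions (every $a_i$ is negative, so $\sum_{i'} \frac{1}{2a_{i'}} < 0$, and the price is a \emph{decreasing} function of $y$ on each piece — yet the set of active prosumers still \emph{grows} with $y$, because lower $\lambda$ activates more prosumers). I would want to state clearly that at $\lambda = 2a_iC_i + b_i$ the active set is $\{1,\dots,i-1\}$ and not $\{1,\dots,i\}$ (the boundary case $z_i = 0$), so that the substitution producing $y^i$ uses the correct truncation; a subtle off-by-one here would shift the formula. A secondary point is handling possible ties in the sorting key $2a_iC_i + b_i$, which can be dealt with by breaking ties arbitrarily and noting that tied prosumers activate simultaneously, consistent with their common $y^i$ value.
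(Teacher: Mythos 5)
Your proposal is correct and follows essentially the same route as the paper, whose own proof of Lemma~\ref{lem:yi} is folded into the proof of Lemma~\ref{lem:ucondiff}: derive the piecewise-linear relation $y = \lambda\sum_{i'\le k}\frac{1}{2a_{i'}} - \sum_{i'\le k}\bigl(C_{i'}+\frac{b_{i'}}{2a_{i'}}\bigr)$ on each region where the active set is fixed, obtain $y^i$ by substituting $\lambda = 2a_iC_i+b_i$ with active set $\{1,\dots,i-1\}$, and use the monotone decrease of $\lambda$ in $y$ together with continuity at the breakpoints (the paper's check that $\lambda_+=\lambda_-$, which also settles your off-by-one concern since prosumer $i$ contributes $z_i=0$ there). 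Your sign bookkeeping ($\sum_i \frac{1}{2a_i}<0$, so larger $y$ means smaller $\lambda$ and a larger active set) matches the paper's argument.
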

	In other words, the set of prosumers with~$z_i>0$ may be written as
	$
	\mathcal{S}(y) = \left\{i\mid y> y^i\right\}.
	$
	\begin{figure*}[!ht]
\centering
\small
\begin{mdframed}
\begin{itemize}
\item Full prosumer participation (truthful bidding): $$	\Wcal^T = \alpha C - \sum_i\frac{(b_i-\alpha)^2}{4a_i}$$
\item Full prosumer participation (strategic bidding):$$\Wcal^S = \left(\frac{{\sum_i\left[\frac{\alpha N +b_i}{2a_i}+C_i\right]}}{(N+1)\sum_i\frac{1}{2a_i}}\right)^2 \sum_i \frac{1}{4a_i} -\sum_i \frac{b_i^2}{4a_i} + \alpha \frac{N\sum_i\left(C_i-\frac{\alpha - b_i}{2a_i}\right)}{N+1}$$
\item No prosumer participation (truthful bidding): $$\Wcal^{TN} = \sum_{\left\{i\mid 2a_iC_i+b_i>\alpha\right\}} \left[\alpha C_i - \frac{(b_i-\alpha)^2}{4a_i}  \right]
			 + \sum_{\left\{i\mid 2a_iC_i+b_i\le\alpha\right\}}\left(a_iC_i^2+b_iC_i\right)$$
\item No prosumer participation (strategic bidding): \begin{align*}\Wcal^{SN} &=\sum_{i}\Bigg[\frac{1}{4a_i}\left(\frac{\sum_{i}\left[\frac{\alpha N +b_i}{2a_i}+C_i \right]\cdot \mathds{1}_{\left\{y^{SN}> y^i\right\}} }{(N+1)\sum_{i}\frac{1}{2a_i}\cdot\mathds{1}_{\left\{y^{SN}>y^i\right\}}}\right)^2-\frac{b_i^2}{4a_i}\Bigg]\cdot\mathds{1}_{\left\{y^{SN}>y^i\right\}}  \\
&\qquad+\sum_{i}\left(a_iC_i^2 + b_iC_i\right)\cdot\mathds{1}_{\left\{y^{SN}\le y^i\right\}} +\alpha \frac{ N\sum_{i}\left(C_i - \frac{\alpha - b_i}{2a_i}\right)\cdot \mathds{1}_{\left\{y^{SN}> y^i\right\}}}{N+1}\end{align*}
\end{itemize}
\end{mdframed}
\caption{Explicit characterization of equilibrium social welfare under different models}
\label{fig:sw}
\end{figure*}
\normalsize

	Combining~\eqref{eq:disn} with~$\lambda= \lambda^{SN}$, Lemma~\ref{lem:yi}, and the supply-demand balance, we obtain that
	\small
	\begin{align}\label{eq:lambsn}
		\lambda^{SN}(y) = \frac{y+\sum_i\left(C_i+\frac{b_i}{2a_i}\right)\cdot \mathds{1}_{\left\{y>y^i\right\}}}{\sum_i\frac{1}{2a_i}\cdot\mathds{1}_{\left\{y>y^i\right\}}}.
	\end{align}
\normalsize
	
	Therefore, each generator essentially solves~\eqref{eq:genpsn} with the market price function given by~\eqref{eq:lambsn}. Since they are all identical, we only look at the equilibrium where $y_j = y_{j'},\forall j,j'\in[N]$. Thus, from the first-order condition of~\eqref{eq:genpsn}, we obtain the condition on optimal supply amount for generator~$j$:
	\small
	\begin{align}\label{eq:yjsn}
		y_j^{SN} = \frac{\sum_i\left(\frac{\alpha - b_i}{2a_i}-C_i\right)\cdot\mathds{1}_{\left\{Ny_j^{SN}>y^i\right\}}}{N+1},\quad \forall j.
	\end{align}
\normalsize
	In order to sell~$y_j^{SN}$ as given in~\eqref{eq:yjsn}, generator~$j$ will bid~$\tilde{c}_j$ such that the system operator will allocate~$y_j^{SN}$ amount of energy to~$j$. The generator thus considers the economic dispatch problem that the system operator solves.
	Therefore, the generator will bid~$\tilde{c}_j$ such that~$y^{SN}:= \sum_jy_j^{SN} = Ny_j^{SN}$ solves the system operator's problem optimally, i.e.,
	\small
	\begin{align}\label{eq:bidsconn}
		\frac{\partial u(C+y)}{\partial y}\bigg|_{y=Ny_j^{SN}} = \frac{\partial \tilde{c}_j(y_j)}{\partial y_j}\bigg|_{y_j = y_j^{SN}}.
	\end{align}
\normalsize
	While there are many possible choices of~$\tilde{c}_j$ that satisfies~\eqref{eq:bidsconn}, one of the optimal bidding strategies for the generator is again to bid a linear cost function.
	\begin{lemma}\label{lem:bidsn}
		The following linear cost function is an optimal bidding strategy for  generator~$j$:
		\small
		\begin{align}\label{bidsn}
			\tilde{c}_j(y_j) &=  \Bigg[\frac{N\alpha\sum_i\frac{1}{2a_i}\cdot\mathds{1}_{\left\{Ny_j^{SN}>y^i\right\}}}{(N+1)\sum_i\frac{1}{2a_i}\cdot \mathds{1}_{\left\{Ny_j^{SN}>y^i\right\}}}\nonumber\\
			&\qquad +\frac{\sum_i\left(C_i+\frac{b_i}{2a_i}\right)\cdot \mathds{1}_{\left\{Ny_j^{SN}>y^i\right\}}}{(N+1)\sum_i\frac{1}{2a_i}\cdot \mathds{1}_{\left\{Ny_j^{SN}>y^i\right\}}}\Bigg]\cdot y_j
		\end{align}
	\normalsize
	\end{lemma}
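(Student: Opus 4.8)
\textbf{Proof plan for Lemma~\ref{lem:bidsn}.}
The plan is to show that the linear bid in~\eqref{bidsn} causes the system operator's economic dispatch to allocate exactly $y_j^{SN}$ to generator~$j$. Since $y_j^{SN}$ in~\eqref{eq:yjsn} is already generator~$j$'s profit-maximizing output, any bid that implements it attains the maximal profit, and thus exhibiting one such bid that is linear proves the claim. Because $\tilde c_j$ is linear, its marginal cost $\partial\tilde c_j/\partial y_j$ is the constant coefficient appearing in~\eqref{bidsn}; by the dispatch optimality condition~\eqref{eq:bidsconn}, it therefore suffices to verify that this coefficient equals $\partial u(C+y)/\partial y$ evaluated at the target total supply $y=Ny_j^{SN}=y^{SN}$.

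The first step is to identify the marginal aggregate utility with the inverse-demand price. Applying the envelope theorem to the definition~\eqref{eq:uc+yn} of $u(C+y)$, the derivative $\partial u(C+y)/\partial y$ equals the optimal Lagrange multiplier of the constraint $\sum_i z_i=y$ in that problem; combining this multiplier with the prosumers' first-order conditions~\eqref{eq:disn} and with Lemma~\ref{lem:yi}, which identifies the active set $\mathcal{S}(y)=\{i:y>y^i\}$, yields precisely $\partial u(C+y)/\partial y=\lambda^{SN}(y)$ with $\lambda^{SN}(y)$ as in~\eqref{eq:lambsn}. Lemma~\ref{lem:ucondiff} guarantees the derivative exists everywhere: away from the finitely many breakpoints $y^i$ the active set is locally constant, so $u$ is locally a smooth quadratic, and matching of the one-sided derivatives across breakpoints is exactly the content of that lemma.

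The second step is the substitution $y=y^{SN}=Ny_j^{SN}$ into $\lambda^{SN}(\cdot)$, using~\eqref{eq:yjsn} to rewrite $Ny_j^{SN}$. Crucially, the indicator $\mathds{1}_{\{Ny_j^{SN}>y^i\}}$ defining the active set in~\eqref{eq:yjsn} coincides with the $\mathds{1}_{\{y^{SN}>y^i\}}$ appearing in~\eqref{eq:lambsn}, so both sums range over the same index set $\mathcal{S}$. Writing the numerator of $\lambda^{SN}(y^{SN})$ as $\frac{N}{N+1}\sum_{i\in\mathcal{S}}\big(\tfrac{\alpha-b_i}{2a_i}-C_i\big)+\sum_{i\in\mathcal{S}}\big(C_i+\tfrac{b_i}{2a_i}\big)$, the terms in $C_i$ and $b_i/(2a_i)$ combine with coefficient $1-\tfrac{N}{N+1}=\tfrac{1}{N+1}$, leaving $\tfrac{1}{N+1}\sum_{i\in\mathcal{S}}\big(\tfrac{N\alpha}{2a_i}+C_i+\tfrac{b_i}{2a_i}\big)$ over the denominator $\sum_{i\in\mathcal{S}}\tfrac{1}{2a_i}$; splitting off the $N\alpha$ contribution reproduces exactly the two-term coefficient in~\eqref{bidsn}. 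This is routine algebra and mirrors the proof of Lemma~\ref{lem:bids}, which is the special case in which every indicator equals one.

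The only genuine obstacle is the first step: making rigorous that $\partial u(C+y)/\partial y=\lambda^{SN}(y)$ as in~\eqref{eq:lambsn} despite the active set $\mathcal{S}(y)$ jumping at the breakpoints $y^i$ — i.e., invoking Lemma~\ref{lem:ucondiff} to conclude the one-sided derivatives agree and checking that the target $y^{SN}$ is not itself a degenerate breakpoint. One should also note that~\eqref{eq:yjsn} is an implicit equation for $y_j^{SN}$ (its right-hand side depends on $y_j^{SN}$ through the indicators); for this lemma we take its solution as given from the generators' problem and only verify that the bid~\eqref{bidsn} implements it, so this implicitness does not enter the proof itself.
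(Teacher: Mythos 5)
Your proposal is correct and follows essentially the same route as the paper's own (omitted) proof: both identify $\partial u(C+y)/\partial y$ with the inverse-demand price $\lambda^{SN}(y)$ of~\eqref{eq:lambsn} via the Lagrange multiplier of~\eqref{eq:uc+yn} (relying on Lemma~\ref{lem:ucondiff} for differentiability across the breakpoints $y^i$), substitute $y=Ny_j^{SN}$ using~\eqref{eq:yjsn}, and carry out the same cancellation yielding the coefficient in~\eqref{bidsn} so that~\eqref{eq:bidsconn} holds. Your added remarks on the implicitness of~\eqref{eq:yjsn} and on $y^{SN}$ not being a degenerate breakpoint are careful refinements the paper glosses over, but they do not change the argument.
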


	Considering the prosumers' and the system operator's problems, we have the following result.
	\begin{proposition}[Competitive Equilibrium]\label{prop:power2n}
		Under Assumption~\ref{assum:power}, if all generators bid as in~\eqref{bidsn}, an optimal solution to~\eqref{eq:sopns} satisfies the following:
			\begin{subequations}
					\begin{align}
							z_i^{SN} &=  \left[- C_i + \frac{\lambda^{SN} - b_i}{2a_i}\right]^+,\quad \forall i,\label{eq:zsysn}\\
							y_j^{SN} &= \frac{\sum_i\left(\frac{\alpha - b_i}{2a_i}-C_i\right)\cdot\mathds{1}_{\left\{Ny_j^{SN}>y^i\right\}}}{N+1},\quad \forall j,\label{eq:zsysysn}
						\end{align}
				\end{subequations}
		where the equilibrium market price (the optimal Lagrange multiplier of the constraint) is
		$$
		\lambda^{SN}
		=\frac{N\alpha}{N+1} + \frac{\sum_i\left(C_i+\frac{b_i}{2a_i}\right)\cdot \mathds{1}_{\left\{y^{SN}>y^i\right\}}}{(N+1)\sum_i\frac{1}{2a_i}\cdot \mathds{1}_{\left\{y^{SN}>y^i\right\}}}.
		$$
		Furthermore, $z_i^{SN}$ and $y_i^{SN}$ optimally solve the prosumer's/generator's problem.
%

	\end{proposition}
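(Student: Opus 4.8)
The plan is to verify that the triple $(z^{SN}, y^{SN}, \lambda^{SN})$ in the statement simultaneously solves the system operator's dispatch~\eqref{eq:sopns}, each prosumer's payoff maximization, and each generator's profit maximization~\eqref{eq:genpsn}; this is exactly what a competitive equilibrium requires. The argument runs parallel to the proof of Proposition~\ref{prop:power2}, the one genuinely new ingredient being the bookkeeping of which prosumers are active at the equilibrium total supply.

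First I would pin down the active set. Given $y^{SN} = N y_j^{SN}$, Lemma~\ref{lem:yi} identifies the set of prosumers purchasing a strictly positive amount as $\mathcal{S}(y^{SN}) = \{i : y^{SN} > y^i\}$; on this set the prosumer first-order conditions $2a_i(C_i + z_i^{SN}) + b_i = \lambda^{SN}$ hold and off it $z_i^{SN} = 0$, which is exactly~\eqref{eq:zsysn} through~\eqref{eq:disn}, so prosumer optimality is immediate once $\lambda^{SN}$ is the clearing price. Summing the active conditions together with supply--demand balance $\sum_i z_i^{SN} = y^{SN}$ yields the stated closed form for $\lambda^{SN}$, i.e.~\eqref{eq:lambsn} evaluated at $y = y^{SN}$. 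For the system operator, with every generator bidding the linear cost~\eqref{bidsn}, problem~\eqref{eq:sopns} reduces via~\eqref{eq:uc+yn} to $\max_{y \ge 0} u(C+y) - \sum_j \tilde c_j(y/N)$; by Lemma~\ref{lem:ucondiff}, $u$ is differentiable with $\partial u(C+y)/\partial y = \lambda^{SN}(y)$ on each interval where the active set is constant, so the first-order condition equates this marginal utility with the constant marginal bid cost --- which is precisely relation~\eqref{eq:bidsconn} used to construct~\eqref{bidsn}. Hence $y^{SN}$ solves the dispatch, the optimal multiplier of the balance constraint is $\lambda^{SN}$ after substituting~\eqref{bidsn}, and the allocation $y_j^{SN}$ follows by symmetry of the generators.

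It then remains to check generator optimality: holding the other generators at $y_{j'}^{SN}$, generator $j$ faces residual inverse demand $\lambda^{SN}(\cdot)$ from~\eqref{eq:lambsn}, so~\eqref{eq:genpsn} is a smooth concave problem in $y_j$ wherever the active set is fixed (the quadratic coefficient $\big(\sum_{i\in\mathcal{S}(y^{SN})}\tfrac{1}{2a_i}\big)^{-1}$ is negative since $a_i<0$), and its first-order condition is~\eqref{eq:yjsn}, whose symmetric solution is~\eqref{eq:zsysysn}; the bid~\eqref{bidsn} was engineered so that the operator awards exactly this quantity. The step I expect to be the main obstacle is the self-referential active set: the indicators $\mathds{1}_{\{N y_j^{SN} > y^i\}}$ appear on both sides of~\eqref{eq:yjsn}, so one must argue a consistent active set exists and that, as $y$ crosses a threshold $y^i$ where $\mathcal{S}(y)$ changes, $\lambda^{SN}(y)$ remains monotone and $u$ stays differentiable (Lemma~\ref{lem:ucondiff}), so the first-order conditions identify a true maximizer rather than a kink. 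Once this active set is fixed, every sum over $i$ collapses to a sum over $\mathcal{S}(y^{SN})$ and the remaining algebra is identical to the full-participation case of Proposition~\ref{prop:power2}.
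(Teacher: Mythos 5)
Your proposal is correct and follows essentially the same route as the paper's proof: you verify that the stated $(z^{SN}, y^{SN}, \lambda^{SN})$ satisfy the first-order/KKT conditions of the operator's dispatch under the linear bid~\eqref{bidsn} (the paper does this via the full Lagrangian of~\eqref{eq:sopns}, you via the reduced scalar problem and Lemma~\ref{lem:ucondiff}, a cosmetic difference), and then check that the same quantities satisfy the prosumer condition~\eqref{eq:disn} and the generator condition~\eqref{eq:yjsn}. The self-referential active-set issue you flag is real but is likewise left implicit in the paper, which simply takes the fixed point of~\eqref{eq:yjsn} as given.
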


	\section{DISCUSSION}\label{sec:dis}
	In previous sections, we have analyzed and derived the equilibrium quantities for all four models, depending on whether we allow prosumer participation and whether generators can bid strategically. For each model, Fig.~\ref{fig:sw} provides an explicit characterization of the equilibrium social welfare.

%
%
	While we have obtained explicit expressions (in terms of parameters of the prosumers and generators) for all four social welfare models, it remains to compare them to finish the proof of Theorem~\ref{thm:power}, which is not obvious in their current form. In this section, we provide alternative expressions for these social welfares. To proceed, we first define the social welfare when there is no generator and no electricity market, i.e., each prosumer consumes the amount of energy that her capacity allows:
	$
	\Wcal_0 := \sum_i u_i(C_i).
	$
	We also define the amount of market price rise due to strategic bidding of the generators as $\delta := \lambda^{S} - \lambda^T$, and~$\delta^N:=\lambda^{SN} - \lambda^{TN}$. With these definitions, we have the following expressions for the social welfare under different models, which use~$\Wcal_0$ as a reference.
	
		\begin{figure*}[ht]
\centering
\small
\begin{mdframed}
	The following relations hold:
	\begin{subequations}
		\begin{align}
			&\Wcal^T - \Wcal^{TN} = - \sum_{\left\{i\mid z_i^T \le 0\right\}}a_i\left(z_i^T\right)^2 \ge 0\label{eq:TTN}\\
			&\Wcal^S - \Wcal^{SN} = \sum_i\left(-a_i{\left(z_i^S\right)}^2 + \delta z_i^S\right) + \sum_ia_i\left(z_i^{SN}\right)^2\- \delta^N \sum_iz_i^{SN}\ge 0\label{eq:SSN}\\
			&\Wcal^T - \Wcal^S = \left(\frac{\sum_iz_i^T}{N+1}\right)^2\cdot \frac{1}{\sum_i\frac{1}{-a_i}} \ge 0\label{eq:TS}\\
			&\Wcal^{TN} - \Wcal^{SN} = \sum_{\left\{i\mid d_i^{SN} = 0, z_i^T>0\right\}}(-a_i)z_i^2 + \left(\frac{\sum_{\left\{i\mid d_i^{SN}>0\right\}}z_i^T}{N+1}\right)^2\cdot \frac{1}{\sum_{\left\{i\mid d_i^{SN}>0\right\}}\frac{1}{-a_i}}\ge 0\label{eq:TNSN}\\
			&\Wcal^{TN} - \Wcal^{SN}\ge \Wcal^T -\Wcal^S\label{eq:final}
		\end{align}
	\end{subequations}
\end{mdframed}
\caption{Restatement of Theorem~\ref{thm:power} (Main Result)}
\label{fig:Thm1}
\end{figure*}
\normalsize

\begin{figure*}
	\centering
	\includegraphics[width=0.9\linewidth]{}
	\caption{{\bf Left:} Social welfare for each market setup vs. $N$. DER participation improves the social welfare. As $N$ increases, strategic bidding converges to truthful bidding, and all inequalities provided in Theorem~\ref{thm:power} hold. {\bf Middle:} Quantification of market power in terms of social welfare loss. When DERs are integrated, market power is mitigated. {\bf Right:} Price for each market setup vs. $N$. Highest price corresponds to strategic bidding without DER participation, but when DERs are present, the price becomes lower. All prices converge to $\alpha=5$. }
	\label{numerical3}
\end{figure*}

	\begin{proposition}\label{prop:usingsw0}
		We have the following expressions for the four social welfares.
		\begin{align}
			\Wcal^T &= \Wcal_0 -\sum_ia_i{\left(z_i^T\right)}^2,\label{eq:WT}\\
			\Wcal^S &= \Wcal_0 + \sum_i\left(-a_i{\left(z_i^S\right)}^2 + \delta z_i^S\right),\label{eq:WS}\\
			\Wcal^{TN} &= \Wcal_0 - \sum_{i}a_i\left(z_i^{TN}\right)^2= \Wcal_0 - \sum_{\left\{i\mid z_i^T > 0\right\}}a_i\left(z_i^T\right)^2,\label{eq:WTN}\\
			\Wcal^{SN} &= \Wcal_0 - \sum_ia_i\left(z_i^{SN}\right)^2 + \delta^N \sum_iz_i^{SN}.\label{eq:WSN}
		\end{align}
	\end{proposition}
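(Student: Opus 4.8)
The plan is to derive all four identities from one common computation and then specialize. In each of the four equilibria the system operator enforces supply--demand balance $\sum_j y_j = \sum_i z_i$, and when we evaluate the \emph{realized} social welfare the generators are charged their true linear cost $c_j(y_j)=\alpha y_j$; hence $\sum_j c_j(y_j) = \alpha\sum_j y_j = \alpha\sum_i z_i$, so in each model the welfare equals $\sum_i u_i(C_i+z_i^{\bullet}) - \alpha\sum_i z_i^{\bullet}$, where $z_i^{\bullet}$ denotes the relevant equilibrium net demand. (It matters here that $\Wcal^S$ and $\Wcal^{SN}$ are the realized welfares computed with the true cost $\alpha y_j$, not the apparent welfares based on the tilted bids $\tilde c_j$.) Expanding the quadratic utility, $u_i(C_i+z_i) = u_i(C_i) + (2a_iC_i+b_i)z_i + a_i z_i^2$, and summing over $i$ gives
\begin{equation*}
\Wcal^{\bullet} = \Wcal_0 + \sum_i\left[(2a_iC_i+b_i-\alpha)\,z_i^{\bullet} + a_i(z_i^{\bullet})^2\right].
\end{equation*}

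The next step is to eliminate $2a_iC_i+b_i$ using the prosumer first-order condition. Whenever $z_i^{\bullet}$ is interior it satisfies $2a_i(C_i+z_i^{\bullet})+b_i = \lambda^{\bullet}$, i.e.\ $2a_iC_i+b_i-\alpha = (\lambda^{\bullet}-\alpha) - 2a_i z_i^{\bullet}$, so the $i$-th summand collapses to $(\lambda^{\bullet}-\alpha)z_i^{\bullet} - a_i(z_i^{\bullet})^2$. For the two full-participation models every prosumer is interior: with $\lambda^T=\alpha$ (Proposition~\ref{prop:power1}) this yields $\Wcal^T = \Wcal_0 - \sum_i a_i(z_i^T)^2$, which is \eqref{eq:WT}; and with $\lambda^S-\alpha = \lambda^S-\lambda^T = \delta$ (Proposition~\ref{prop:power2}) it yields $\Wcal^S = \Wcal_0 + \sum_i\left(-a_i(z_i^S)^2 + \delta z_i^S\right)$, which is \eqref{eq:WS}.

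For the no-participation models I would split the sum into active prosumers ($z_i^{\bullet}>0$) and inactive ones ($z_i^{\bullet}=0$). An inactive prosumer contributes nothing to the summand (and $\delta^N z_i^{\bullet}=0$ there as well), so every sum over all $i$ equals the corresponding sum over the active set, on which the interior first-order condition applies exactly as before. With $\lambda^{TN}=\alpha$ (Proposition~\ref{prop:power1n}) this gives $\Wcal^{TN} = \Wcal_0 - \sum_i a_i(z_i^{TN})^2$; moreover $z_i^{TN} = \left[-C_i + (\alpha-b_i)/(2a_i)\right]^+$, so the active set is exactly $\{i\mid z_i^T>0\}$ with $z_i^{TN}=z_i^T$ there, which is the second form in \eqref{eq:WTN}. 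Similarly $\lambda^{SN}-\alpha = \lambda^{SN}-\lambda^{TN} = \delta^N$ (Proposition~\ref{prop:power2n}) gives $\Wcal^{SN} = \Wcal_0 - \sum_i a_i(z_i^{SN})^2 + \delta^N\sum_i z_i^{SN}$, which is \eqref{eq:WSN}.

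I do not expect a genuine obstacle: the argument is essentially bookkeeping resting on the prosumer optimality conditions and supply--demand balance. The two points that need care are (i) evaluating the strategic welfares with the true cost $\alpha y_j$ rather than the reported $\tilde c_j$, and (ii) handling the complementary-slackness structure of the $z_i\ge 0$ constraint in the no-participation models so that the clean quadratic identity survives the kinks in the aggregate demand response.
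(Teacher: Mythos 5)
Your proposal is correct and follows essentially the same route as the paper's proof: both expand the quadratic utility exactly (the paper via a second-order Taylor expansion of $u_i(C_i)$ about $C_i+z_i$, you via the explicit quadratic form about $C_i$ --- identical computations for a quadratic), substitute the prosumer first-order condition $u_i'(C_i+z_i^{\bullet})=\lambda^{\bullet}$ together with supply--demand balance and the true linear cost, and split active from inactive prosumers in the no-participation models. No gap to report.
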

We next obtain the expressions for~$\delta$ and~$\delta^N$. Under full prosumer participation and strategic bidding of the generators, we have that
$
u'(C_i+z_i^S) = \lambda^S = \lambda^T + \delta = \alpha + \delta.
$
From the truthful bidding case, we also have that $\alpha = u_i'(C_i+z_i^T)$. Thus, we can write~$\delta$ as
$
\delta = u'(C_i+z_i^S) - u'(C_i+z_i^T) = 2a_i(z_i^S-z_i^T), \forall i,
$
or equivalently, by noting that~$\sum_iz_i^S = y^S = y^T\cdot\frac{N}{N+1} = \frac{N}{N+1}\sum_iz_i^T$, we have that
\begin{align}\label{eq:delta}
	\delta = \frac{\sum_i\left(z_i^S- z_i^T\right)}{\sum_i\frac{1}{2a_i}} = \frac{-\left(\frac{1}{N+1}\right)\sum_iz_i^T}{\sum_i\frac{1}{2a_i}}.
\end{align}
Under the models that prosumers cannot sell, with generators bidding strategically, we have that $u'_i(C_i+z_i^{SN}) = \lambda^{SN} = \lambda^{TN}+\delta^N = \alpha + \delta^N$ for those prosumers with~$z_i^{SN}>0$. From the truthful bidding case, we also have that~$\alpha = u_i'(C_i+z_i^{TN})$ for those prosumers with~$z_i^{TN}>0$. Thus, we can write~$\delta^N$ as
$
\delta^N = u_i'(C_i+z_i^{SN}) - u_i'(C_i+z_i^{TN}) = 2a_i(z_i^{SN} - z_i^{TN
}) = 2a_i(z_i^{SN} - z_i^{T}),\ \forall i\text{ s.t. }z_i^{SN}>0.
$
By noting that $\sum_{\left\{i\mid z_i^{SN}>0\right\}}z_i^{SN} = y^{SN} = \frac{N}{N+1}\sum_{\left\{i\mid z_i^{SN}>0\right\}}z_i^T$, we have that
\begin{align}\label{eq:deltaN}
	\delta^N = \frac{\sum_{\left\{i\mid z_i^{SN}>0\right\}}(-z_i^{SN}+z_i^T)}{-\sum_{\left\{i\mid z_i^{SN}>0\right\}}\frac{1}{2a_i}}= \frac{\frac{1}{N+1}\sum_{\left\{i\mid z_i^{SN}>0\right\}}z_i^T}{-\sum_{\left\{i\mid z_i^{SN}>0\right\}}\frac{1}{2a_i}}.
\end{align}

With Proposition~\ref{prop:usingsw0},~\eqref{eq:delta}, and~\eqref{eq:deltaN}, we are ready to show the relations in Theorem~\ref{thm:power}. After some algebra, we restate Thoerem~\ref{thm:power} as in Fig.~\ref{fig:Thm1}, which
finishes the proof of Theorem~\ref{thm:power}, and verifies the benefit of aggregating distributed energy resources (DERs), i.e., by allowing the aggregation of DERs, the optimal social welfare is improved with either truthful bidding generators or strategic bidding generators, and the loss of social welfare due to strategic bidding of the generators is reduced in the full participation model compared to that in the no participation model. Thus, we can state the main takeaway point of the paper, which is, {\em efficient DER aggregation mitigates market power of generators, and such mitigation is quantified explicitly as in Fig.~\ref{fig:Thm1}}. 

\section{ILLUSTRATIVE EXAMPLE}\label{sec:ex}
Next, we consider an example with $n=2$ and provide illustrations of our results. 
We let the true cost of each generator be $c(y)=\alpha y = 5 y$. For each prosumer $i$, we let $a_i=-0.1$ and  $b_i=10$, and we distinguish between them via the capacities, with $C_1=10$ and $C_2=30$. The parameters are picked such that $2a_1C_1+b_1>\alpha$  and $2a_2C_2+b_2<\alpha$, i.e., prosumer $i=1$ will always have a positive demand. To make our example more interesting, we vary the number of generators from $N=1$ to $N=20$ (outcomes saturate for $N>20$). In Fig.~\ref{numerical3}, we plot the social welfare for each market setup, efficiency loss due to strategic behavior of the generators, and prices. The key outcomes can be summarized as follows: 
\begin{enumerate}
\item $\Wcal^{SN} \rightarrow \Wcal^{TN}$ and  $\Wcal^{S} \rightarrow \Wcal^{T}$ as $N\rightarrow\infty$. 
\item $\lambda^{SN} \rightarrow \alpha$ and  $\lambda^{S} \rightarrow \alpha$  as $N\rightarrow\infty$. 
\item For $N<\infty$, $\alpha < \lambda^{SN} < \lambda^{S}$. 
\item Efficient DER aggregation mitigates the market power of generators. 
\end{enumerate}
\section{CONCLUSIONS AND FUTURE WORKS}\label{sec:con}

There are many open questions related to DER aggregation, as they are intermittent, uncontrollable, and rapidly increasing. While the work in \cite{9683117} provides an efficient DER aggregation model, market power of generators was not addressed. Consequently, this work serves as a stepping stone towards understanding the impact of DER aggregation onto market power of generators.  Particularly, we have proven the inequalities stated in Theorem \ref{thm:power} and provided explicit expressions later in Figure~\ref{fig:Thm1}. The main results of this paper state that {\it the optimal social welfare under efficient DER integration is always greater than that without prosumer participation, the optimal social welfare under truthful bidding is always greater than that under strategic bidding of generators, and the loss of social welfare due to strategic bidding of the generators is mitigated under efficient DER aggregation.}

DERs are naturally stochastic, so, incorporating uncertainty and its impact on market power mitigation would be an interesting question to address. Also, in our analysis, we have assumed quadratic utilities and linear generation costs, so, generalizations to generic concave utility functions and convex generation costs is another research direction. Finally, we have assumed identical generators, so it would also be interesting to explore generators of different kinds, each with generator-specific considerations, while taking into account the competition among them and network considerations.

\bibliographystyle{IEEEtran}
\bibliography{references}

\section*{APPENDIX}

\begin{proof}[Proof of Proposition~\ref{prop:power1}]
	We write the Lagrangian of~\eqref{eq:sop} as
	$
		\mathcal{L} = \sum_iu_i(C_i+z_i) - \sum_jc_j(y_j) +\lambda\left(\sum_jy_j-\sum_iz_i\right) + \mu_i(z_i+C_i) + \nu_jy_j,
	$
	where~$\lambda,\mu_i,\nu_j$ are the Lagrange multipliers of the constraints. The KKT optimality conditions are
	\small
	\begin{subequations}\label{eq:wkkt}
		\begin{align}
			&\frac{\partial \Lcal}{\partial z_i} = 2a_i(C_i+z_i)+b_i-\lambda +\mu_i = 0,\quad \forall i,\\
			&\frac{\partial \Lcal}{\partial y_j} = -\alpha +\lambda +\nu_j = 0,\quad \forall j,\\
			&\lambda\left(\sum_jy_j-\sum_iz_i\right) = 0,\quad \mu_i(z_i+C_i)  = 0,\ \forall i,\\
			&\nu_jy_j = 0,\ \forall j,\quad \sum_jy_j-\sum_iz_i = 0,\\
			& z_i > -C_i,\ \forall i,\quad y_j\ge 0, \ \forall j,\quad  \lambda, \mu_i,\nu_j \ge 0,\ \forall i,j.
		\end{align}
	\end{subequations}
\normalsize
	By Assumption~\ref{assum:power}, we have that $y_j>0$ and thus~$\nu_j =0$ for all~$j\in[N]$. With some algebra, we can conclude from~\eqref{eq:wkkt} that
	$
		\lambda^T = \alpha,\ z_i^T = -C_i+\frac{\alpha-b_i}{2a_i},\ \sum_jy_j^T = \sum_iz_i^T = -C+\sum_i\frac{\alpha-b_i}{2a_i}.
	$
	We note that the optimal~$z_i^T$ from the above is also optimal for the prosumer's problem. The optimal social welfare is thus
	$
		\Wcal^T = \sum_iu_i(C_i+z_i^T) - \sum_jc_j(y_j^T) = \alpha C-\sum_i\frac{(b_i-\alpha)^2}{4a_i}.
	$
	This completes the proof of Proposition~\ref{prop:power1}.
\end{proof}

\begin{proof}[Proof of Lemma~\ref{lem:bids}]
	Recall from~\eqref{eq:u} that $u(C+y) = \left\{\max_{z_i>-C_i}\sum_iu_i(C_i+z_i)\ \text{s.t.}\ \sum_iz_i  = y\right\},$
	which is itself an optimization problem. We write its Lagrangian:
	$
		\Lcal = \sum_iu_i(C_i+z_i) + \lambda\left(y-\sum_iz_i\right) + \mu_i(z_i+C_i),
	$
	where~$\lambda$ and~$\mu_i$ are the Lagrange multipliers of the constraints. The KKT optimality conditions are
	\begin{subequations}
		\begin{align}
			\frac{\partial \Lcal}{\partial z_i} &= 2a_i(C_i+z_i)+b_i-\lambda +\mu_i = 0,\quad \forall i,\\
			&\lambda\left(y - \sum_iz_i\right) = 0,\quad \mu_i(z_i+C_i) = 0,\ \forall i,\\
			&\sum_iz_i=y,\quad z_i> -C_i,\ \forall i,\quad \lambda,\mu_i\ge 0, \ \forall i.
		\end{align}
	\end{subequations}
	Thus, the optimal $z_i = \frac{\lambda - b_i}{2a_i}-C_i$, and thus
	$
	\sum_iz_i = \sum_i\frac{\lambda-b_i}{2a_i}-\sum_iC_i = y,
	$
	which implies that
	$
	\lambda = \frac{y+\sum_iC_i+\sum_i\frac{b_i}{2a_i}}{\sum_i\frac{1}{2a_i}}.
	$
	Therefore,
	$
		u(C+y) = \sum_{i}a_i\left(\frac{\lambda - b_i}{2a_i}\right)^2 + \sum_i b_i\left(\frac{\lambda - b_i}{2a_i}\right)= \sum_i \frac{1}{4a_i}\left(\frac{y+C + \sum_i\frac{b_i}{2a_i}}{\sum_i\frac{1}{2a_i}} - b_i\right)^2 + \sum_i \frac{b_i}{2a_i}\left(\frac{y+C + \sum_i\frac{b_i}{2a_i}}{\sum_i\frac{1}{2a_i}}-b_i\right),
	$
	which, together with~\eqref{eq:yjs}, implies that
	$
		\frac{\partial u(C+y)}{\partial y}\bigg|_{y=Ny_j^S} = \frac{\frac{- C + \sum_i\frac{\alpha - b_i}{2a_i}}{N+1}\cdot N+C + \sum_i\frac{b_i}{2a_i}}{\sum_i\frac{1}{2a_i}} \\
		= \frac{N\alpha\sum_i\frac{1}{2a_i} + C +\sum_i\frac{b_i}{2a_i}}{(N+1)\sum_i\frac{1}{2a_i}}.
	$
	Therefore, bidding~\eqref{eq:bids} ensures that the condition~\eqref{eq:bidscon} is satisfied, which ensures that the system operator will optimally assign~$y_j^S$ to generator~$j$.
\end{proof}

\begin{proof}[Proof of Proposition~\ref{prop:power2}]
	We write the Lagrangian of~\eqref{eq:sops} as
	$
		\mathcal{L} = \sum_iu_i(C_i+z_i) - \sum_j\tilde{c}_j(y_j) +\lambda\left(\sum_jy_j-\sum_iz_i\right) + \mu_i(z_i+C_i) + \nu_jy_j,
	$
	where~$\lambda,\mu_i,\nu_j$ are the Lagrange multipliers of the constraints, and~$\tilde{c}_j$ is given in~\eqref{eq:bids}. The KKT optimality conditions are
	\small
	\begin{subequations}\label{eq:wkkts}
		\begin{align}
			&\frac{\partial \Lcal}{\partial z_i} = 2a_i(C_i+z_i)+b_i-\lambda +\mu_i = 0,\ \forall i,\\
			&\frac{\partial \Lcal}{\partial y_j} = \frac{N\alpha\sum_i\frac{1}{2a_i} + C +\sum_i\frac{b_i}{2a_i}}{(N+1)\sum_i\frac{1}{2a_i}} +\lambda +\nu_j = 0,\ \forall j,\\
			&\lambda\left(\sum_jy_j-\sum_iz_i\right) = 0,\quad\mu_i(z_i+C_i)  = 0,\ \forall i,\\
			&\nu_jy_j = 0,\ \forall j,\quad \sum_jy_j-\sum_iz_i = 0,\quad z_i > -C_i,\ \forall i,\\
			& y_j\ge 0, \ \forall j,\quad \lambda, \mu_i,\nu_j \ge 0,\ \forall i,j.
		\end{align}
	\end{subequations}
\normalsize
	By Assumption~\ref{assum:power}, we have that $y_j>0$ and thus~$\nu_j =0$ for all~$j\in[N]$. With some algebra, we can conclude from~\eqref{eq:wkkts} that
	$
		\lambda^S =  \frac{N\alpha\sum_i\frac{1}{2a_i} + C +\sum_i\frac{b_i}{2a_i}}{(N+1)\sum_i\frac{1}{2a_i}},\
		z_i^S = -C_i+\frac{\lambda^S-b_i}{2a_i},\
		y_j^S = \frac{\sum_jy_j^S}{N} = \frac{\sum_iz_i^S}{N} = \frac{-C+\sum_i\frac{\lambda^S-b_i}{2a_i}}{N} = \frac{-C+\sum_i\frac{\alpha - b_i}{2a_i}}{N+1}.
	$
	We note that the optimal~$z_i^S$ we derived from the above also satisfies prosumer's optimal condition $2a_i(C_i+z_i^S) + b_i = \lambda^S$, and the optimal~$y_j^S$ from the above also satisfies~\eqref{eq:yjs}, i.e., $z_i^S$ and $y_j^S$ are optimal to prosumer~$i$ and generator~$j$, respectively. The optimal social welfare is given by
	$
		\Wcal^S = \sum_iu_i(C_i+z_i^S) - \sum_jc_j(y_j^S) = \left(\frac{{\sum_i\left[\frac{\alpha N +b_i}{2a_i}+C_i\right]}}{(N+1)\sum_i\frac{1}{2a_i}}\right)^2 \sum_i \frac{1}{4a_i} -\sum_i \frac{b_i^2}{4a_i} + \alpha \frac{N\sum_i\left(C_i-\frac{\alpha - b_i}{2a_i}\right)}{N+1}.
	$
	This completes Proposition~\ref{prop:power2}.
\end{proof}

\begin{proof}[Proof of Proposition~\ref{prop:power1n}]
	We write the Lagrangian of~\eqref{eq:sopn} as
	$
		\Lcal = \sum_iu_i(C_i+z_i) - \sum_jc_j(y_j) + \lambda\left(\sum_jy_j-\sum_iz_i\right) + \mu_iz_i + \nu_jy_j,
	$
	where~$\lambda, \mu_i,\nu_j$ are the Lagrange multipliers of the constraints. The KKT optimality conditions are
	\small
	\begin{subequations}\label{eq:wkktn}
		\begin{align}
			&\frac{\partial \Lcal}{\partial z_i} = 2a_i(C_i+z_i)+b_i-\lambda +\mu_i = 0,\ \forall i,\label{eq:kkta}\\
			&\frac{\partial \Lcal}{\partial y_j} = -\alpha +\lambda +\nu_j = 0,\ \forall j,\quad \lambda\left(\sum_jy_j-\sum_iz_i\right) = 0,\label{eq:kktc}\\
			&\mu_iz_i  = 0,\ \forall i,\quad\nu_jy_j = 0,\ \forall j,\quad \sum_jy_j-\sum_iz_i = 0,\label{eq:kktf}\\
			& z_i, y_j, \lambda, \mu_i,\nu_j \ge 0,\ \forall i,j. \label{eq:kktg}
		\end{align}
	\end{subequations}
\normalsize
	By Assumption~\eqref{assum:power}, we have that~$y_j>0$ and thus~$\nu_j = 0$ for all~$j\in[N]$ by~\eqref{eq:kktf}. From~\eqref{eq:kktc}, we have that~$\lambda^{TN} = \alpha$. Also~\eqref{eq:kkta},~\eqref{eq:kktf}, and~\eqref{eq:kktg} together imply that
	$
		z_i^{TN} = \left[-C_i+\frac{\alpha-b_i}{2a_i}\right]^+, \ \forall i.
	$
	From~\eqref{eq:kktc}, we then have that
	$
		\sum_iz_i^{TN} = \sum_i\left[-C_i+\frac{\alpha-b_i}{2a_i}\right]^+ = \sum_jy_j^{TN} = N\cdot y_j^{TN},
	$
	which implies that
	$
		y_j^{TN} = \frac{\sum_i\left[-C_i+\frac{\alpha-b_i}{2a_i}\right]^+}{N},\ \forall j.
	$
	Therefore, we may write the social welfare as
	$
		\Wcal^{TN} = \sum_iu_i(C_i+z_i^{TN}) - \sum_jc_j(y_j^{TN})= \sum_{\left\{i\mid 2a_iC_i+b_i>\alpha\right\}} \left[\alpha C_i - \frac{(b_i-\alpha)^2}{4a_i}  \right]+ \sum_{\left\{i\mid 2a_iC_i+b_i\le\alpha\right\}}\left(a_iC_i^2+b_iC_i\right).
	$
\end{proof}

\begin{proof}[Proof of Lemma~\ref{lem:ucondiff}]
	Recall from~\eqref{eq:uc+yn} that
	$u(C+y) = \left\{\max_{z_i\ge 0}\sum_iu_i(C_i+z_i)\ \text{s.t.}\ \sum_iz_i  = y\right\},$
	which is itself an optimization problem. We write its Lagrangian:
	$
		\Lcal = \sum_iu_i(C_i+z_i) + \lambda\left(y-\sum_iz_i\right) + \mu_iz_i,
	$
	where~$\lambda$ and~$\mu_i$ are the Lagrange multipliers of the constraints. The KKT optimality conditions are
	\small
	\begin{subequations}
		\begin{align}
			&\frac{\partial \Lcal}{\partial z_i} = 2a_i(C_i+z_i)+b_i-\lambda +\mu_i = 0,\ \forall i,\\
			&\lambda\left(y - \sum_iz_i\right) = 0,\quad \mu_iz_i = 0,\ \forall i,\quad\sum_iz_i=y\\
			&z_i, \lambda,\mu_i\ge 0, \ \forall i.
		\end{align}
	\end{subequations}
\normalsize
	The optimal $z_i$ is given by
	$
		z_i = \frac{\lambda-\mu_i-b_i}{2a_i} - C_i.
	$
	Since~$\mu_iz_i = 0$, for any given~$\lambda$, we have the set of prosumers with~$z_i>0$, i.e.,
	$
		\mathcal{S}(\lambda) = \left\{i\mid z_i>0\right\} = \left\{i\ \Big| \ \frac{\lambda-b_i}{2a_i}-C_i > 0\right\} = \left\{i\mid \lambda < 2a_iC_i+b_i\right\}.
	$
	Similarly, any prosumers in $\mathcal{S}^c(\lambda) := \left\{i\mid \lambda \ge 2a_iC_i+b_i\right\}$ will have~$z_i = 0$.
	Thus, we have that
	$
		\sum_iz_i = \sum_{i\in\mathcal{S}(\lambda)}\left[\frac{\lambda-b_i}{2a_i} - C_i\right] = y,
	$
	which implies that
	\small
	\begin{align}\label{eq:lambday}
		\lambda &= \frac{y + \sum_{i\in\mathcal{S}(\lambda)}C_i + \sum_{i\in\mathcal{S}(\lambda)}\frac{b_i}{2a_i}}{\sum_{i\in\mathcal{S}(\lambda)}\frac{1}{2a_i}}.
	\end{align}
\normalsize
	We will figure out an expression of the set~$\mathcal{S}$ as a function of~$y$.
	Recall that prosumers are sorted in decreasing order of~$2a_iC_i+b_i$. If~$\lambda\ge 2a_1C_1 + b_1$, then all prosumers have~$z_i = 0$, and~$\mathcal{S}(\lambda)$ is empty. As $\lambda$ decreases to~$2a_1C_1+b_1$, the first prosumer is included in the set.
	
	When the set~$\mathcal{S}(\lambda)$ does not change, as~$y$ increases, $\lambda$ will decrease according to~\eqref{eq:lambday}. When $y$ increases to some critical point~$y^i$ that the prosumer~$i>1$ is just about to be included in the set~$\mathcal{S}$, we look at the corresponding $\lambda$ right before~$i$ is included:
	$
		\lambda_- = 2a_{i}C_{i}+b_{i} = \frac{y^i+\sum_{i'=1}^{i-1}\left(C_{i'}+\frac{b_{i'}}{2a_{i'}}\right)}{\sum_{i'=1}^{i-1}\frac{1}{2a_{i'}}},
	$
	which implies that
	$
		y^i= \left(2a_{i}C_{i}+b_{i}\right)\sum_{i'=1}^{i-1}\frac{1}{2a_{i'}}  - \sum_{i'=1}^{i-1}\left(C_{i'}+\frac{b_{i'}}{2a_{i'}}\right).
	$
	When prosumer~$i$ is just included in the set, we have that
	$
		\lambda_+ = \frac{y^i+\sum_{i'=1}^{i}\left(C_{i'}+\frac{b_{i'}}{2a_{i'}}\right)}{\sum_{i'=1}^{i}\frac{1}{2a_{i'}}}.
	$
	
	One can verify that~$\lambda_+ = \lambda_-$, which implies that as $y$ increases, $\lambda$ continuously decreases, even at those critical points when more prosumers are being added to the set~$\mathcal{S}$. Therefore, the set of prosumers with~$z_i>0$ can be expressed as
	$
		\mathcal{S}(y) = \left\{i\mid y > y^i\right\}.
	$
	
	We may thus write
	$
		u(C+y) = \sum_{i\in\mathcal{S}^c(y)}\left(a_iC_i^2+b_iC_i\right) + \sum_{i\in\mathcal{S}(y)}\left[a_i\left(\frac{\lambda-b_i}{2a_i}\right)^2 + b_i\left(\frac{\lambda - b_i}{2a_i}\right)\right]$,
		which then leads to 
		$
		u(C+y)= \sum_{i\in\mathcal{S}}\bigg[\frac{1}{4a_i}\left(\frac{y + \sum_{i\in\mathcal{S}} C_i + \sum_{i\in\mathcal{S}}\frac{b_i}{2a_i}}{\sum_{i\in\mathcal{S}}\frac{1}{2a_i}}-b_i\right)^2 + \frac{b_i}{2a_i}\left(\frac{y + \sum_{i\in\mathcal{S}} C_i + \sum_{i\in\mathcal{S}}\frac{b_i}{2a_i}}{\sum_{i\in\mathcal{S}}\frac{1}{2a_i}}-b_i\right)\bigg]  +\sum_{i\in\mathcal{S}^c}\left(a_iC_i^2 + b_iC_i\right).
	$
	When $y$ is within the range that~$\mathcal{S}$ does not change, we have that
	$
		\frac{\partial u(C+y)}{\partial y} =\sum_{i\in\mathcal{S}} \bigg[\frac{1}{2a_i}\left(\frac{y + \sum_{i\in\mathcal{S}} C_i + \sum_{i\in\mathcal{S}}\frac{b_i}{2a_i}}{\sum_{i\in\mathcal{S}}\frac{1}{2a_i}}-b_i\right)\frac{1}{\sum_{i\in\mathcal{S}}\frac{1}{2a_i}}+\frac{b_i}{2a_i}\frac{1}{\sum_{i\in\mathcal{S}}\frac{1}{2a_i}}\bigg]= \sum_{i\in\mathcal{S}}\frac{1}{2a_i}\frac{1}{\sum_{i\in\mathcal{S}}\frac{1}{2a_i}}\left(\frac{y + \sum_{i\in\mathcal{S}} C_i + \sum_{i\in\mathcal{S}}\frac{b_i}{2a_i}}{\sum_{i\in\mathcal{S}}\frac{1}{2a_i}}\right) \\= \frac{y + \sum_{i\in\mathcal{S}} C_i + \sum_{i\in\mathcal{S}}\frac{b_i}{2a_i}}{\sum_{i\in\mathcal{S}}\frac{1}{2a_i}}
		= \lambda.
	$
	Thus, we can conclude that, as $y$ increases, $\frac{\partial u(C+y)}{\partial y}$ continuously decreases, and the overall utility of consumption~$u(C+y)$ is continuous and differentiable in~$y$.
\end{proof}

\end{document}